\numberwithin{equation}{section}
\newcommand{\ord}{\operatorname{ord}\nolimits}
\newcommand{\CC}{{\mathbb{C}}}
\newcommand{\PP}{{\mathbb{P}}}
\newcommand{\ZZ}{{\mathbb{Z}}}
\newcommand{\calA}{{\mathcal A}}
\newcommand{\calC}{{\mathcal C}}
\newcommand{\calM}{{\mathcal M}}
\newcommand{\bK}{{\textbf K}}
\newcommand{\bw}{{\boldsymbol\omega}}
\newcommand{\tK}{{\widetilde{K}}}
\newcommand{\tw}{{\widetilde{\omega}}}
\newcommand{\tbK}{{\widetilde{\textbf K}}}
\newcommand{\tbw}{{\widetilde{\boldsymbol\omega}}}
\newcommand{\txi}{{\tilde{\xi}}}
\newcommand{\teta}{{\tilde{\eta}}}
\newcommand{\hXi}{{\widehat{\Xi}}}
\newcommand{\hC}{{\widehat{C}}}
\newcommand{\tbC}{{\widetilde{C}}}
\newcommand{\op}{\operatorname}
\newcommand{\hol}{\op{hol}}
\newcommand{\res}{\op{res}}
\newcommand\us{{\underline{s}}}
\newcommand\tC{{\widetilde{C}}}
\newcommand\ut{{\underline{t}}}
\newcommand\uphi{{\underline{\phi}}}
\newcommand{\ugamma}{\underline{\gamma}}
\theoremstyle{plain}
\newtheorem{thm}{Theorem}[section]
\newtheorem{lm}[thm]{Lemma}
\newtheorem{prop}[thm]{Proposition}
\newtheorem{cor}[thm]{Corollary}
\theoremstyle{definition}
\newtheorem{df}[thm]{Definition}
\newtheorem{notat}[thm]{Notation}
\newtheorem{rmk}[thm]{Remark}
\begin{document}

\title{General Variational Formulas for Abelian Differentials}
\author{Xuntao Hu}
\address{Mathematics Department, Stony Brook University,
Stony Brook, NY 11794-3651, USA}
\email{xuntao.hu@stonybrook.edu}
\author{Chaya Norton}
\address{Concordia University, Montreal, QC, Canada, and Centre de Recherches Math\'ematiques (CRM), Universit\'e de Montr\,eal, QC, Canada}
\email{nortonch@crm.umontreal.ca}

\begin{abstract}

We use the jump problem technique developed in a recent paper \cite{GKN17} to compute the variational formula of any stable differential and its periods to arbitrary precision in plumbing coordinates. In particular, we give the explicit variational formula for the degeneration of the period matrix, easily reproving the results of Yamada \cite{Yam80} for nodal curves with one node and extending them to an arbitrary stable curve. Concrete examples are included.

We also apply the same technique to give an alternative proof of the sufficiency part of the theorem in \cite{BCGGM16} on the closures of strata of differentials with prescribed multiplicities of zeroes and poles.

\end{abstract}

\maketitle

\section{Introduction}\label{intro}


We work over the field of complex numbers $\CC$. Let $\calM_g$ be the moduli space of curves, and $\overline{\calM}_g$ be its Deligne-Mumford compactification. Given a stable nodal curve $C$ with $n$ nodes, the standard \textit{plumbing} construction cuts out neighborhoods of size $\sqrt{|s_e|}$ on the normalization of $C$ at the two pre-images $q_e$ and $q_{-e}$ of each node $q_{|e|}$ of $C$, and identifies their boundaries (called \textit{seams}, denoted by $\gamma_{\pm e}$) via a gluing map $I_e$ sending $z_e$ to $z_{-e}:=s_e/z_e^{-1}$, where $|s_e|\ll 1$ is called the {\em plumbing parameter} and $z_e$ and $z_{-e}$ are chosen local coordinates near $q_e$ and $q_{-e}$ respectively. The irreducible components of the nodal curve $C$ are denoted by $C_v$.  

The plumbing construction thus constructs a family of curves $\calC\to \Delta$ with the central fiber identified with $C$, where $\Delta$ is the small polydisc neighborhood of $0\in \CC^n$ with coordinates given by the plumbing parameters $\us:=(s_1,\ldots,s_n)$. Depending on circumstances $\us$ are also called the {\em plumbing coordinates}, as they give versal deformation coordinates on $\overline\calM_g$ to the boundary stratum containing the point $C$. The Riemann surface resulting from plumbing with parameters $\us$ is denoted by $C_\us$.

\subsection{Motivations} We are interested in studying the degeneration of the periods of an Abelian differential in plumbing coordinates, which can be seen as a direct application of the study of the behavior of degenerating families of Abelian differentials in plumbing coordinates. Our interest in the degeneration of the periods is motivated by the following two sources: the degeneration of period matrices near the boundary of the Schottky locus in $\overline\calA_g$; and the degeneration of the period coordinates (both absolute and relative periods) of strata in the Hodge bundle (also known as the moduli spaces of Abelian differentials with fixed multiplicities at marked points). In this paper we will only study the degeneration of period matrices. Application to period coordinates near the boundary of the strata will be treated elsewhere.


\subsection{Results and structure of the paper} \label{intro_results}
In this paper we give a complete answer to the question stated above. In order to state our results, we introduce some more notation. 

The Hodge bundle $\Omega\calM_g$ is a rank $g$ vector bundle over $\calM_g$. A point in the Hodge bundle corresponds to a pair $(C,\Omega)$ where $\Omega$ is an Abelian differential on $C$. One can extend the Hodge bundle over $\overline\calM_g$. The fiber of the extension $\Omega\overline\calM_g$ over a nodal curve $C$ in the boundary of $\overline\calM_g$ parametrizes {\em stable differentials}, that is, meromorphic differentials that have at worst simple poles at the nodes with opposite residues.

The goal in our paper is to compute the variational formula for any stable differential and its period over any $1$-cycle on $C$ in plumbing coordinates. The term ``variational formula" in our paper means an expansion in terms of both $s_e$ and $\ln{|s_e|}$. Note that a variational formula in this sense is {\em not} synonymous to a power series expansion in plumbing coordinates $\us$. We will use specifically the term ``$\us$-expansion" when we mean the latter, where no logarithmic terms are involved. Moreover, throughout the paper objects subscripted by ``$e$'' are indexed by the set of edges of the dual graph of the stable curve $C$, and those by ``$v$'' are indexed by the set of vertices of the dual graph. 

The technique we use to construct the degenerating family of Abelian differentials $\Omega_\us$ along the plumbing family $C_\us$ is called (solving) the jump problem, which will be properly defined in Section \ref{sec:notation}. The main idea is that given a stable differential $\Omega$ on $C$, we have the mis-matches $\{\Omega|_{\gamma_e}-I_e^*(\Omega|_{\gamma_{-e}})\}$ (which we call the \textit{jumps} of $\Omega$) on the seams $\gamma_{\pm e}$ at opposite sides of each node $q_{|e|}$. The solution to the jump problem is a ``correction" differential $\eta$ that matches the jumps of $\Omega$ with opposite sign. By adding $\eta$ to $\Omega$ on each irreducible component, one obtains new differentials with zero jumps, that can thus be glued to get a global holomorphic differential $\Omega_\us$ on $C_\us$. 

The jump problem is a special version of the classical Dirichlet problem. It was developed and used in the real-analytic setting in a recent paper by Grushevsky, Krichever and the second author \cite{GKN17}. In the classical approach, the Cauchy kernel on the plumbed surface is used, while in \cite{GKN17} the fixed Cauchy kernels on the irreducible components of the nodal curve are used, which is crucial to obtain an $L^2$-bound of the solution to the jump problem in plumbing parameters.

Our construction of the solution to the jump problem largely follows the method in that paper. Instead of the real normalization condition used in \cite{GKN17}, we normalized the solution by requiring that it has vanishing $A$-periods, where $A$ is a set of generators of a chosen Lagrangian subgroup of $H_1(C_\us,\ZZ)$ containing the classes of the seams. This normalization condition allows us to work in the holomorphic setting (as opposed to the real-analytic setting in \cite{GKN17}) where we can use Cauchy's integral formula. As a consequence, the construction of the solution is simpler.

In Section \ref{sec:mainresult} of our paper, the solution $\eta$ to the jump problem is constructed explicitly as an iterated series (see (\ref{mainjumps}) and (\ref{mainsol})). We show that such a solution depends analytically on the plumbing parameters and moreover, we bound the $L^2$-norm of $\eta$ by a power of the $L^\infty$-norm of $\us$. For convenience we denote $\hol(\Omega)$ the regular part in the Laurent expansion of the function $\frac{\Omega(z_e)}{dz_e}$ given in the local coordinate $z_e$ that was used to define plumbing near the node $q_e$.

\begin{thm}(=Theorem \ref{main})\label{thm_intro}
Let $(C,\Omega)\in\partial\Omega\overline\calM_g$ be a stable differential. For each $v$, let $\Omega_v$ be the restriction of $\Omega$ on $C_v$. There exists a unique solution $\{\eta_v=\eta_{v,\us}\}$ with vanishing $A$-periods to the jump problem for $\Omega$, such that for any $|\us|$ small enough and all $|s_e|>0$, $\{\Omega_{v,\us}:=\Omega_v+\eta_{v}\}$ defines a holomorphic differential $\Omega_\us$ on $C_\us$ satisfying $\Omega_v=\lim_{\us\to 0}\Omega_\us|_{C_v}$ uniformly on compact sets of $C_v\setminus\cup_{e\in E_v}q_e$. Furthermore, we have $||\eta_{v,\us}||_{L^2}=O(\sqrt{|\us|})$.
\end{thm}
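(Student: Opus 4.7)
The plan is to build $\eta=(\eta_v)$ as an iterated series of Cauchy-type integrals constructed componentwise, following the framework of \cite{GKN17} but adapted to the holomorphic setting via $A$-period (rather than real) normalization.

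First, on each component $C_v$ I would fix a Cauchy kernel $K_v(z,w)$: a meromorphic differential in $z$ with a simple pole of residue $1$ along the diagonal $z=w$ and no other singularities, uniquely normalized to have vanishing $A$-periods in $z$ (with the $A$-cycles on $C_v$ chosen as the components of the fixed Lagrangian in $H_1(C_\us,\ZZ)$ that lie inside $C_v$). For any continuous density $\phi$ on a seam $\gamma_e\subset C_v$, the Cauchy transform
\begin{equation*}
(\calK_v\phi)(z):=\frac{1}{2\pi i}\int_{\gamma_e} K_v(z,w)\phi(w)
\end{equation*}
defines a holomorphic differential on $C_v\setminus\gamma_e$ with vanishing $A$-periods whose boundary values across $\gamma_e$ jump by $\phi$, via the Plemelj-Sokhotski formula.

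Second, I would compute the initial jumps $\phi_e^{(0)}:=\Omega|_{\gamma_e}-I_e^*\Omega|_{\gamma_{-e}}$ on each seam. A direct Laurent expansion using the plumbing relation $z_{-e}=s_e/z_e$ shows that the opposite residues of the stable differential at $q_e$ and $q_{-e}$ cancel in this difference, yielding $\|\phi_e^{(0)}\|_{L^2(\gamma_e)}=O(\sqrt{|s_e|})$. Applying the Cauchy transforms on the two components meeting at each node (with densities distributed so that the two boundary contributions cancel the mismatch on the plumbed seam) produces a first correction that kills the original jumps, but introduces new boundary values of $\calK_v\phi_e^{(0)}$ on the \emph{other} seams $\gamma_{e'}\subset C_v$; combined across components these define a new family of jumps $\phi_e^{(1)}$ to correct at the next step. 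Iterating yields
\begin{equation*}
\eta_v:=\sum_{n\geq 0}\sum_{e\in E_v}\calK_v\phi_e^{(n)}.
\end{equation*}
Uniqueness is immediate: the difference of two solutions with vanishing $A$-periods glues to a global holomorphic differential on $C_\us$ with zero $A$-periods, hence vanishes.

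The main obstacle is proving $L^2$-convergence of this iterated series with the stated bound. The key estimate is that for distinct seams $\gamma_e,\gamma_{e'}\subset C_v$ the kernel $K_v(z,w)$ is uniformly bounded for $z\in\gamma_{e'}$ and $w\in\gamma_e$ (they are compactly separated loops on a fixed Riemann surface), so Cauchy-Schwarz combined with the arc length $\sim\sqrt{|s_e|}$ of each seam gives
\begin{equation*}
\|\calK_v\phi_e\|_{L^2(\gamma_{e'})}\leq C\,|s_e|^{1/4}\,|s_{e'}|^{1/4}\,\|\phi_e\|_{L^2(\gamma_e)},
\end{equation*}
a contraction factor of order $O(\sqrt{|\us|})$ per iteration. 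The geometric series then converges in $L^2$ on the union of seams, and since the initial datum is already $O(\sqrt{|s_e|})$, the total $L^2$-norm of $\eta_v$ is $O(\sqrt{|\us|})$. The $L^2$-bound on the plumbed surface and uniform convergence on compacts of $C_v\setminus\{q_e\}_{e\in E_v}$ follow from standard interior estimates on the Cauchy transform, since $K_v(\cdot,w)$ is uniformly bounded on interior compacts as $w$ ranges over the seams; in particular $\eta_{v,\us}\to 0$ and $\Omega_v=\lim_{\us\to 0}\Omega_\us|_{C_v}$. Holomorphic dependence on $\us$ follows from the holomorphic dependence of the seams and the kernel on the plumbing parameters.
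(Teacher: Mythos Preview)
Your overall strategy is the same as the paper's: iterate Cauchy transforms against the fixed $A$-normalized kernels $K_v$ on the components, following \cite{GKN17}. Uniqueness is handled correctly. However, there is a genuine gap in the iteration step.

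You assert that after applying $\calK_v\phi_e^{(0)}$ the original jump is killed and the only new boundary data appear on the \emph{other} seams $\gamma_{e'}$ with $e'\neq e$. This is not correct: the boundary value of $\calK_v\phi_e^{(0)}$ on the \emph{same} seam $\gamma_e$ from the $\hC_v$-side is, by Plemelj--Sokhotski, equal to (a sign times) the density plus the integral against the \emph{holomorphic} part $\bK_v$ of the kernel. The first piece is what cancels the old jump after you combine both components; the second piece is a genuinely new, nonzero contribution to $\phi_e^{(1)}$ that you have not accounted for. The paper treats this explicitly: it separates $K_v=\frac{dz_e}{2\pi i(z_e-w_e)}+\bK_v$, uses Cauchy's integral formula on the holomorphic density $I_e^*\xi_{-e}^{(k-1)}$ to evaluate the singular part exactly (this is where holomorphicity of the density, not just continuity, is essential), and records the residual $\xi_e^{(k)}:=\sum_{e'}\int_{\gamma_{e'}}\bK_v(z_e,w_{e'})I_{e'}^*\xi_{-e'}^{(k-1)}$ as the new jump---including the diagonal term $e'=e$. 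Your contraction estimate is stated only for distinct seams, so as written the iteration does not close.

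A secondary point: the paper works in $L^\infty$ on the seams and exploits the zero-integral property $\int_{\gamma_e}I_e^*\xi_{-e}^{(k-1)}=0$ to subtract $\tbK_v(z_e,0)$, gaining an extra factor $|w_{e'}|\sim\sqrt{|s_{e'}|}$ and hence a contraction of order $|\us|$ per step rather than your $\sqrt{|\us|}$. This is not needed for mere convergence, but it is what makes the leading-order expansions in $\us$ clean later in the paper. Also, your claimed $\|\phi_e^{(0)}\|_{L^2(\gamma_e)}=O(\sqrt{|s_e|})$ is off under the arc-length $L^2$ norm (it is $O(|s_e|^{1/4})$); and the passage from seam-$L^2$ to surface-$L^2$ is not ``standard interior estimates'' but a Stokes computation $\|\eta_v\|_{L^2}^2=\sum_e\int_{\gamma_e}\bar\pi_v\,\eta_v$ with $\pi_v$ a primitive of $\eta_v$, which in the paper is controlled via the $L^\infty$ bound on the seams.
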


The solution to the jump problem $\eta_v$ is constructed explicitly in (\ref{mainjumps}) and (\ref{mainsol}). In order to highlight the series construction, we compute the leading term of the $\us$-expansion for $\eta_v^{(k)}$, which in particular gives the linear term of the $\us$-expansion for $\Omega_\us$. Let $l_v^k=(e_1,\ldots, e_k)$ be a path of length $k$ starting at a given vertex $v=v(e_1)$ in the dual graph $\Gamma_C$. Let $\omega_v(z,w)$ be the fundamental normalized bidifferential (see section $3.1$ for details) on $C_v$ and $\beta_{e,e'}:=\hol(\omega_v)(q_e,q_{e'})$. The $\us$-expansion of $\eta_v^{(k)}$ is given as follows (Proposition \ref{expansion}):
\begin{equation*}
\eta_v^{(k)}(z)=(-1)^{k}\sum_{l_v^k}\prod_{i=1}^{k}s_{e_i}\cdot \omega_v(z,q_{e_1})\prod_{j=1}^{k-1}\beta_{-e_j,e_{j+1}}\hol(\Omega)(q_{-e_k})+O(|\us|^{k+1}),
\end{equation*}
where $z\in C_\us$, and the sum is over the set of paths of length $k$ in the dual graph starting at vertex $v$. One can compare this formula with \cite[Proposition~7]{Wol15}, where a complete expansion is given in Schiffer coordinates. We also want to point out that one can in principle compute for any $k$ the explicit $\us$-expansion of $\eta_v^{(k)}$ (and hence for $\Omega_\us$) via our method.

We are grateful to Scott Wolpert for the following remark: the construction provides a local frame for the sheaf of Abelian differentials near a nodal curve. This implies that the push forward of the relative dualizing sheaf is locally free of the expected rank. The locally-freeness of the first and second powers was first shown in \cite{Ma76} and then in \cite{HK13}. Our method can be further generalized to give a local frame of $k$-differential for any positive $k$ near a boundary point in the moduli space, which will not be treated in this paper.

In Section \ref{sec:periodmatrix}, we compute the leading terms in the variational formula for the period of $\Omega_\us$ over any smooth cycle in $C_\us$. Let the residue of $\Omega$ at $q_e$ be denoted $r_e$, so that $r_e=-r_{-e}$. Let $\alpha$ be any oriented loop in $C$ not contained completely in any irreducible component $C_{v_i}$. Let $\{q_{|e_0|},\ldots ,q_{|e_{N-1}|}\}$ be the ordered collection of nodes that $\alpha$ passes through (with possible repetition), so that $q_{-e_{i-1}}$ and $q_{e_i}$ lie on the same component $C_{v_i}$, and let $q_{|e_N|}=q_{|e_0|}$. Let $\alpha_\us$ be a perturbation of $\alpha$ such that its restrictions on each $C_v$ minus the caps at each node glue correctly to give a loop on $C_\us$.

\begin{thm}(=Theorem \ref{per_gen})\label{intro:generalperiod}
The variational formula of the period of $\Omega_\us$ over $\alpha_\us$ is given by:
\begin{equation*}
\int_{\alpha_\us}\Omega_{\us}=\sum_{i=1}^{N}\left(r_{e_i}\ln{|s_{e_i}|}+c_i+l_i\right)+O(|\us|^2),
\end{equation*}
where $c_i$ and $l_i$ are the constant and linear terms in $\us$ respectively, explicitly given as
\begin{equation*}
\begin{split}
&c_i=\lim_{\us\to 0}\left(\int_{z_{-e_{i-1}}^{-1}(\sqrt{|s_{e_{i-1}}|})}^{z_{e_{i}}^{-1}(\sqrt{|s_{e_i}|})}\Omega_{v_i}-\frac{1}{2}(r_{e_{i-1}}\ln{|s_{e_{i-1}}|}+r_{e_i}\ln{|s_{e_i}|})\right),\\
&l_i:=-\sum_{e\in E_{v_i}}s_e\hol(\Omega)(q_{-e})\cdot\sigma_e,
\end{split}
\end{equation*}
where $E_{v_i}$ denotes the set of nodes on the component $C_{v_i}$,  and
\begin{equation*}
\sigma_e:=
\begin{cases}
\lim_{\us\to 0}\left(\int\limits_{q_{-e_{i-1}}}^{z_{e_i}^{-1}(s_{e_i})}\omega_{v(e_i)}(z_{e_i},q_{e_i})+\frac{1}{s_{e_i}}\right) & \mbox{if } e=e_i;\\
\lim_{\us\to 0}\left(\int\limits^{q_{e_{i}}}_{z_{-e_{i-1}}^{-1}(s_{e_i})}\omega_{v(e_i)}(z_{-e_{i-1}},q_{-e_{i-1}})-\frac{1}{s_{e_i}}\right) & \mbox{if } e=-e_{i-1};\\
\int_{q_{-e_{i-1}}}^{q_{e_i}}\omega_{v(e_i)}(z,q_e) & \mbox{otherwise}.
\end{cases}
\end{equation*}
\end{thm}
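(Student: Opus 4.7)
My plan is to decompose the cycle as $\alpha_\us = \sqcup_{i=1}^N \alpha_\us^{(i)}$, where $\alpha_\us^{(i)}$ is the arc on $C_{v_i}$ from the seam near $q_{-e_{i-1}}$ to the seam near $q_{e_i}$, and then integrate $\Omega_\us|_{C_{v_i}} = \Omega_{v_i} + \eta_{v_i}$ separately on each arc using the construction from Theorem~\ref{thm_intro}:
\begin{equation*}
\int_{\alpha_\us} \Omega_\us \;=\; \sum_{i=1}^N \int_{\alpha_\us^{(i)}} \Omega_{v_i} \;+\; \sum_{i=1}^N \int_{\alpha_\us^{(i)}} \eta_{v_i}.
\end{equation*}
The first sum will produce the logarithmic divergences and the constants $c_i$, and the second will produce the linear corrections $l_i$ (up to the stated error).

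For the first sum, I would use the local Laurent expansion $\Omega_{v_i} = r_{e_i}\frac{dz_{e_i}}{z_{e_i}} + (\textrm{holomorphic})$ near $q_{e_i}$, and the analogous expansion near $q_{-e_{i-1}}$ with residue $-r_{e_{i-1}}$, to extract the logarithmic divergences that arise from evaluating at $z_{e_i} = \sqrt{|s_{e_i}|}$ and $z_{-e_{i-1}} = \sqrt{|s_{e_{i-1}}|}$. A direct computation of the boundary contributions yields $\frac{1}{2}(r_{e_i}\ln|s_{e_i}| + r_{e_{i-1}}\ln|s_{e_{i-1}}|)$ as the divergent part on arc $i$; subtracting it gives precisely the finite limit $c_i$. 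Summing over $i$ and using the cyclic identification $e_0 = e_N$ combines the two half-contributions at each node into the full coefficient $r_{e_i}\ln|s_{e_i}|$.

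For the second sum, I would apply the $\us$-expansion of Proposition~\ref{expansion}, whose first-order term is
\begin{equation*}
\eta_{v_i}(z) \;=\; -\sum_{e \in E_{v_i}} s_e\, \omega_{v_i}(z, q_e)\, \hol(\Omega)(q_{-e}) \;+\; O(|\us|^2).
\end{equation*}
Integrating term-by-term along $\alpha_\us^{(i)}$ produces $l_i = -\sum_e s_e \hol(\Omega)(q_{-e}) \cdot \int_{\alpha_\us^{(i)}} \omega_{v_i}(z, q_e)$, and the case distinction in $\sigma_e$ is dictated by the position of $q_e$ relative to the arc. When $q_e$ is an endpoint (i.e.\ $e = e_i$ or $e = -e_{i-1}$), $\omega_{v_i}(z,q_e)$ has a double pole there, so $\int \omega_{v_i}(z,q_e)$ behaves like $\mp 1/z_e$, and the regularizations $\pm 1/s_e$ in the definition of $\sigma_e$ are precisely what is needed to produce a finite limit. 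When $q_e$ lies on neither endpoint, $\omega_{v_i}(z,q_e)$ is holomorphic along the arc and the limit gives $\int_{q_{-e_{i-1}}}^{q_{e_i}} \omega_{v_i}(z,q_e)$.

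The main obstacle will be controlling the $O(|\us|^2)$ remainder rigorously. For the higher-order iterates $\eta_{v_i}^{(k)}$ with $k \geq 2$, I would combine the $L^2$-bound $\|\eta_{v_i}^{(k)}\|_{L^2} = O(|\us|^k)$ from Theorem~\ref{thm_intro} with a Cauchy--Schwarz argument on the portion of each arc that lies in a fixed compact subset of $C_{v_i}$ away from the nodes, while the near-node contributions are handled via the explicit formulas~(\ref{mainjumps})--(\ref{mainsol}) of the iterative construction. The slight discrepancy between the $\sqrt{|s_e|}$-cutoff used in defining $c_i$ and the $s_e$-cutoff used in defining $\sigma_e$ reflects the freedom to adjust the arc endpoints within the plumbing annulus; any resulting mismatch should be shown to fall within the $O(|\us|^2)$ error, which is the delicate bookkeeping step at the heart of the argument.
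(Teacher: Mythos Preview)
Your overall strategy---decompose $\alpha_\us$ into arcs on each component and integrate $\Omega_{v_i}$ and $\eta_{v_i}$ separately---matches the paper's approach. However, there is a genuine gap in your handling of the ``slight discrepancy'' between the $\sqrt{|s_e|}$-cutoff and the $s_e$-cutoff, and your expectation that it falls within $O(|\us|^2)$ is incorrect.

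Concretely: integrating the holomorphic part $\xi_{e_i}^{(0)}$ of $\Omega_{v_i}$ up to $z_{e_i}=\sqrt{|s_{e_i}|}$ produces a term $\txi_{e_i}\sqrt{s_{e_i}}+O(s_{e_i})$, not $O(s_{e_i})$. Likewise, integrating the double-pole piece of $\eta_{v_i}^{(1)}$ (namely $-s_{e_i}\txi_{-e_i}\,\omega_{v_i}(z,q_{e_i})$) up to $\sqrt{|s_{e_i}|}$ yields $\txi_{-e_i}\sqrt{s_{e_i}}+O(s_{e_i})$. So both of your sums carry unaccounted $O(\sqrt{|\us|})$ contributions. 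These terms do cancel, but only after you combine them with the analogous contributions from the \emph{other side} of the node $q_{|e_i|}$ (on $C_{v_{i+1}}$), where the roles of $\txi_{e_i}$ and $\txi_{-e_i}$ are swapped and the signs reverse. Your proposal treats each arc in isolation and never exhibits this cross-node cancellation.

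The paper resolves this cleanly with a dedicated lemma (Lemma~\ref{per_trsf}): using the local identity $\eta_v^{(k)}(z_e)=I_e^*\xi_{-e}^{(k-1)}(z_e)+\xi_e^{(k)}(z_e)$ near the seam, it rewrites the sum of the two half-annulus integrals $\int_1^{\sqrt{s_e}}\Omega_{v(e),\us}+\int_{\sqrt{s_e}}^1\Omega_{v(-e),\us}$ as $r_e\ln|s_e|+\sum_k\int_1^{s_e}\xi_e^{(k)}+\sum_k\int_{s_e}^1\xi_{-e}^{(k)}$. This telescoping converts the $\sqrt{s_e}$ endpoints to $s_e$ endpoints in one stroke, so only integer powers of $\us$ appear thereafter---which is exactly why $\sigma_e$ is defined with an $s_e$ cutoff. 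Without this lemma (or an equivalent explicit cancellation argument across each node), your bookkeeping does not close.
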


This theorem in particular shows that $\int_{\alpha_\us}\Omega_{\us}-\sum_{i=1}^{N}r_{e_i}\ln{|s_{e_i}|}$ is a holomorphic function in $\us$. Using the theorem we can compute the variational formula for the degeneration of the period matrices near an arbitrary stable curve. We choose a suitable symplectic basis $\{A_{k,\us},B_{k,\us}\}_{k=1}^g$ of $H_1(C_\us,\ZZ)$ such that the first $m$ $A$-cycle classes generates the span of the homology classes of the seams. We take a normalized basis $\{v_1,\ldots,v_g\}$ of $H^1(C,K_C)$, such that after applying the jump problem construction, the resulting set of differentials $\{v_{k,\us}\}_{k=1}^g$ is a normalized basis of $H^1(C_\us,K_{C_\us})$ (see Section \ref{sec:notation} and \ref{sec:periodmatrix} for a proper definition). By taking $\Omega=v_k$ and $\alpha_\us=B_{h,\us}$, the following corollary gives the leading terms in the variational formula of the period matrix.

\begin{cor}(=Corollary \ref{periodmatrix})\label{intro:periodmatrix}
For any $e$ and $k$, denote $N_{|e|,k}:=\gamma_{|e|}\times B_{k,\us}$ the intersection product. For any fixed $h,k$, the expansion of $\tau_{h,k}(\us)=\int_{B_{h,\us}}v_{k,\us}$ is given by
\begin{equation*}
\begin{split}
\tau_{h,k}(\us)=&\sum_{e\in E_C}\frac{N_{|e|,h}\cdot N_{|e|,k}}{2}\cdot \ln|s_e|\\
&+\lim_{\us\to 0}\sum_{i=1}^N\left(\int_{z_{-e_{i-1}}^{-1}(\sqrt{|s_{e_{i-1}}|})}^{z_{e_{i}}^{-1}(\sqrt{|s_{e_i}|})}v_k -N_{|e_i|,h}N_{|e_i|,k}\ln{|s_{e_i}|}\right)\\
&-\sum_{e\in E_C}s_e\left(\hol(v_k)(q_e)\hol(v_h)(q_{-e})\right)+O(|\us|^2),
\end{split}
\end{equation*}
where $E_C$ is the set of oriented edges of the dual graph of $C$, $\{q_{|e_i|}\}_{i=0}^{N-1}$ is the set of nodes $B_h$ passes through. Furthermore, under our choice of symplectic basis, $N_{|e|,h}\cdot N_{|e|,k}$ is equal to $1$ if $h=k$ and the node $q_{|e|}$ lies on $B_h$ and equals $0$ otherwise.
\end{cor}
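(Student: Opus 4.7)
The plan is to apply Theorem \ref{per_gen} directly with $\Omega = v_k$ and $\alpha_\us = B_{h,\us}$, and to identify each of the three types of terms (logarithmic, constant, linear) in the resulting expansion with the corresponding ones in the corollary.

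For the logarithmic terms, first compute the residues $r_{e_i}$ of $v_k$ at the nodes $q_{e_i}$ that $B_h$ traverses. The normalization $\oint_{A_{j,\us}} v_{k,\us} = \delta_{jk}$, together with the assumption that the first $m$ $A$-cycles are homologous to the seams, pins down these residues in terms of the intersection numbers $N_{|e|,k}$, yielding $r_{e_i} = N_{|e_i|,k}$ (in the convention where the seam periods are normalized). Since the symplectic basis is chosen so that $N_{|e_i|,h} = 1$ for each $e_i$ on $B_h$, this is equivalently $r_{e_i} = N_{|e_i|,h} N_{|e_i|,k}$. The sum $\sum_i r_{e_i} \ln|s_{e_i}|$ then rewrites as $\sum_{e \in E_C} \frac{N_{|e|,h} N_{|e|,k}}{2} \ln|s_e|$: the summand vanishes for edges off $B_h$, and the factor of $\tfrac{1}{2}$ is reconciled by summing over both orientations of each unoriented edge in $E_C$. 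A cyclic relabeling $\sum_i \tfrac{1}{2}(r_{e_{i-1}} \ln|s_{e_{i-1}}| + r_{e_i} \ln|s_{e_i}|) = \sum_i r_{e_i} \ln|s_{e_i}|$ handles the log counter-term inside each $c_i$, producing the $N_{|e_i|,h} N_{|e_i|,k} \ln|s_{e_i}|$ piece inside the limit on the second line of the corollary.

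The crux of the proof is the identification of the linear term. The total contribution from the $l_i$'s in Theorem \ref{per_gen} is
\[
\sum_{i=1}^N l_i = -\sum_i \sum_{e \in E_{v_i}} s_e \, \hol(v_k)(q_{-e}) \, \sigma_e.
\]
Relabeling $e \mapsto -e$ and using that each oriented edge belongs to a unique irreducible component collapses this into $-\sum_{e \in E_C} s_e \, \hol(v_k)(q_e) \, \sigma_{-e}$. The target expression in the corollary is $-\sum_{e \in E_C} s_e \, \hol(v_k)(q_e) \, \hol(v_h)(q_{-e})$, so what remains is the identity $\sigma_{-e} = \hol(v_h)(q_{-e})$ for every oriented edge $e$. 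This should follow from a Cauchy-type representation of $v_h$: on each component $C_v$, the restriction $v_h|_{C_v}$ can be written as an integral of the normalized bidifferential $\omega_v(\cdot, z)$ over the portion of $B_h$ lying on $C_v$, modified by simple poles of prescribed residues at the nodes where $B_h$ enters or exits. Evaluating this representation at $q_{-e}$ and extracting the regular part in the coordinate $z_{-e}$ reproduces the three cases of the $\sigma$-formula in Theorem \ref{per_gen}; the $\pm 1/s_{e_i}$ counter-terms in the entry/exit cases provide the exact regularization needed to isolate $\hol(v_h)$ from the simple-pole contribution.

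The main obstacle is this last identification: one must carefully align the pole structure of $v_h|_{C_v}$ at the $B_h$-nodes with the regularization built into $\sigma_e$ and with the definition of $\hol$. Once it is in place, the three contributions combine into the claimed expansion, and the final statement that $N_{|e|,h} \cdot N_{|e|,k}$ equals $1$ when $h = k$ and $q_{|e|} \in B_h$ and vanishes otherwise is immediate from the symplectic basis where $\{A_{j,\us}\}_{j \le m}$ spans the seam classes and $A_{j,\us} \cdot B_{h,\us} = \delta_{jh}$.
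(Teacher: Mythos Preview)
Your proposal is correct and follows essentially the same route as the paper: apply Theorem~\ref{generalperiod} with $\Omega=v_k$ and $\alpha=B_h$, identify the residues $r_{e_i}$ with the products $N_{|e_i|,h}N_{|e_i|,k}$, and recognize $\sigma_e=\hol(\tilde v_h)(q_e)$ via the integral representation $v_h(z)=\sum_i\int_{q_{-e_{i-1}}}^{q_{e_i}}\omega_{v(e_i)}(w,z)$ (or $v_h=\int_{B_h}\omega(\cdot,z)$ when $h>m$). One small sharpening: the passage from $\sum_i\sum_{e\in E_{v_i}}$ to $\sum_{e\in E_C}$ is justified not by ``each oriented edge belongs to a unique component'' but by the fact that $v_h$ is supported on $\bigcup_{v\in p([B_h])}C_v$, so $\hol(\tilde v_h)(q_e)=0$ for any $e$ with $v(e)\notin p([B_h])$; this is exactly the observation the paper makes.
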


In the paper \cite{Tan91} Taniguchi computed the logarithmic term 
$$
\sum_{e\in E_C}\frac{N_{|e|,h}N_{|e|,k}}{2}\ln|s_e|
$$ 
in the variational formula of $\tau_{h,k}(\us)$. In addition to his result, we get the constant and the linear term as above, and in principle one can also expand the error term to any order. The variational formula of the period matrix is useful in various places. For instance it is used by the first author in \cite{Hu} to compute the degeneration of the theta constants near the boundary of $\overline{\mathcal A }_3$, the (compactified) moduli space of p.p.a.v of dimension three, and furthermore via a modular form method to compute the boundary behavior of the strata $\mathcal{H}(4)$ of the Hodge bundle where the Abelian differentials have a quadruple zero.

When restricted to the case where $C$ has only one node, Theorem \ref{thm_intro} and Corollary \ref{intro:periodmatrix} imply the results in \cite{Yam80}. The computation reproving \cite{Yam80} is done in Section \ref{sec:examples}. In order to demonstrate the greater applicability of our method compared to \cite{Yam80}, in Section \ref{sec:examples} we study two other examples which, to the knowledge of the authors, have not been treated in the literature previously. The first one is the so-called banana curve, that is, curves with two irreducible components meeting at two nodes. The second example is the totally degenerate curve, namely a $\PP^1$ with $2g$ marked points glued in pairs. The last example appears to be important in the study of Teichm\"uller curves.

All of the above are only applicable to the case where $\Omega$ is a stable differential. In Section \ref{sec:hop} we apply the jump problem technique to give a new method to smoothen a differential with higher order zeroes and poles at the nodes. The existing terminology of such a smoothing procedure is {``higher order plumbing"}, as opposed to standard plumbing. The higher order plumbing is a crucial ingredient used by Bainbridge, Chen, Gendron, Grushevsky and M\"oller in the paper \cite{BCGGM16} to construct the \textit{incidence variety compactification} (IVC) of strata of differentials with prescribed zeroes and poles. In that paper, they prove that the necessary and sufficient conditions for a stable differentials $(C,\Omega)$ to lie in the boundary of the IVC is the existence of a {\em twisted differential} $\Xi$ and a level function $l$ on the vertices of the dual graph of $C$, with certain compatibility conditions. The obviously harder direction is the sufficiency of the conditions, which essentially requires a construction of a degenerating family of Abelian differentials in the strata to the limit differential $(C,\Omega)$ with the compatible data $(\Xi,l)$. In \cite{BCGGM16}, the authors give two such constructions by using a plumbing argument and a flat geometry argument respectively. Both arguments only give rise to one-parameter degenerating families. 

In Section \ref{sec:hop}, we briefly revisit their main results, and construct via the jump problem approach a degenerating family that is different from the two mentioned above  (Theorem \ref{hop}). The number of parameters in our degenerating family is equal to the number of levels in the level graph $\Gamma_C$ minus $1$. In particular, we reprove the sufficiency of the conditions for a stable differential to lie in the boundary of the IVC.

\subsection{Prior work} The history of studying the degeneration of period matrices using plumbing coordinates traces back to Fay \cite{Fay73} and Yamada \cite{Yam80}. They only study the case $n=1$, that is, when $C$ has only one node. In this case the authors give complete power series expansion in $s$ (the plumbing parameter at the only node) of Abelian differentials and describe the degeneration of period matrices by a variational formula in terms of $s$ and $\ln{s}$. 

For the case where the stable curve $C$ has multiple nodes, Taniguchi \cite{Tan91} discusses the degeneration of the period matrix and computes the logarithmic term in the variational formula. However, the complete variational formula of neither the Abelian differentials nor the period matrix is derived in his papers. A recent paper of Wolpert \cite{Wol15} gives a complete expansion of Abelian differentials and the second order expansion for the period matrix in terms of Schiffer deformations which are deformation of smooth Riemann surfaces. In \cite{LZR13}, Liu, Zhao, and Rao study holomorphic one-forms and the period matrix under small deformations of a smooth Riemann surfaces to give a complete variational formula using the Kuranishi coordinates on the Teichm\"uller space.


\section{Smoothing Riemann Surfaces}\label{sec:notation}

Let $C$ be a stable nodal curve over the complex numbers. In this section we recall the plumbing construction and fix the notation. 


\begin{df}\label{dualgraph} (Dual graph)
The {\em dual graph} $\Gamma_C$ of a stable curve $C$ is a graph where each unoriented edge corresponds to a node of $C$, and each vertex $v$ corresponds to the normalization of an irreducible component $C_v$ of $C$. The edge connecting vertices corresponds to the node between components.
\end{df}

For future convenience we write $E_C$ for the set of oriented edges $e$ of $\Gamma_C$. We will use $-e$ to denote the same edge as $e$ but with the opposite orientation, and $|e|=|-e|$ the corresponding unoriented edge. Namely, $q_{\pm e}$ are the pre-images of the node $q_{|e|}$ in the normalization of $C$. We write $v(e)$ to denote the source of the oriented edge $e$, and write $E_v$ for the set of edges $e$ such that $v=v(e)$, that is, edges pointing out of the vertex $v$. We denote $|E|_C=\{|e|\}_{e_\in E_C}$ the set of unoriented edges. The cardinality of $|E|_C$ is half of the cardinality of $E_C$.


\subsection{Plumbing construction} 
We now recall the local smoothing procedure of a nodal curve $C$ via {\em plumbing}. There are many equivalent versions of the local plumbing procedure, we follow the one used in \cite{Yam80}, \cite{Wol13} and \cite{GKN17}.

\begin{df}\label{plumbing} (Standard plumbing)
Let $q_e, q_{-e}$ be the two preimages of the node $q_{|e|}$ in the normalization of $C$. Let $z_{\pm e}$ be fixed chosen local coordinates near $q_{\pm e}$. Take a sufficiently small $s_e=s_{-e}\in\CC$, we denote $U_{\pm e}=U_{\pm e}^{s_e}:=\{|z_{\pm e}|<\sqrt{|s_e|}\}\subset C$, and denote $\gamma_{\pm e}:=\partial U_{\pm e}$, which we call the {\em seams}. We orient each seam $\gamma_e$ counter-clockwise with respect to $U_e$. The {\em standard plumbing} $C_{s_e}$ of $C$ is 
$$
C_{s_e}:=[C\backslash U_e\sqcup U_{-e}]/(\gamma_e\sim\gamma_{-e}),
$$
where $\gamma_e\sim\gamma_{-e}$ is identified via the diffeomorphism $I_e:\gamma_e\to\gamma_{-e}$ sending $z_e$ to $z_{-e}={s_e}/{z_e}$. We call the identified seam $\gamma_{|e|}$. The holomorphic structure of $C_{s_e}$ is inherited from $C\backslash \overline{U}_e\sqcup\overline{U}_{-e}$.
\end{df}

\begin{notat}\label{plumbingnotat}
\begin{enumerate}

\item Since $s_e=s_{-e}$, we can use the notation $s_{|e|}$ and denote $\us:=\{s_{|e|}\}_{|E|_C}$. In later parts of the paper we will continue to use $s_e$ (instead of $s_{|e|}$) for simplicity.

\item We write $C_\us$ for the global smoothing of $C$ by plumbing every node $q_{|e|}$ with {\em plumbing parameter} $s_e$, so that $C=C_{\underline 0}$. Let $\hC_v:=C_v\backslash \sqcup_{e\in E_v}U_e$, then $\hC_v$ has boundaries $\ugamma:=\{\gamma_e\}_{e\in E_v}$. We use $\tbC_v$ to denote the interior of $\hC_v$, and $\hC_\us$ to denote the disjoint union of $\hC_v$ for all $v$. We have $C_\us=\hC_\us/\{\gamma_e\sim\gamma_{-e}\}_{|E|_C}$.

\item Throughout this paper, in a specified component $C_v$, the subscripted $z_e$ is used to denote the chosen local coordinate near the node $q_e$ for every $e\in E_v$ for the standard plumbing. The non-subscripted notation $z$ is used to denote an arbitary local coordinate of any point in $\tbC_v$.

\item For future convenience, we denote $|\us|:=\max_{|e|\in |E|_C}|s_e|$.
\end{enumerate}
\end{notat}

\begin{rmk}
Let $u=(u_1,\ldots,u_k)$ be some coordinates along the boundary stratum of $\overline\calM_g$ that $C$ lies in. One can think of the boundary stratum as a Cartesian product of moduli of curves with marked points, and $u$ is the combination of some coordinates chosen on each moduli space. It is a standard result in Teichm\"uller theory (see \cite{IT}, and \cite{HK13}) that the set of plumbing parameters $\us$ together with $u$ give local coordinates on $\overline\calM_g$ near $C$. We denote $C_{u,\us}$ a nearby curve of $C$, then $C=C_{u_0,\underline{0}}$ for some $u_0$. Our results depend smoothly on $u$ throughout the paper, and all the bounds we derive in this paper hold for $u$ varying in a small neighborhood of $u_0$, therefore we fix the coordinate $u_0$ for $C$ and consistently write $C_\us=C_{u_0,\us}$. 

\end{rmk}


\subsection{Conditions on residues} 
Our goal is to express the variational formulas for Abelian differentials with at worst simple poles on $C$ in plumbing coordinates. Take a stable differential $\Omega$ on the stable curve $C$ in the boundary of the Deligne-Mumford compactification $\overline{\mathcal{M}}_g$. Denote $\Omega_v$ to be the restriction of $\Omega$ to the irreducible component $C_v$. We require $\Omega_v$ to be a meromorphic differential whose only singularities are simple poles at the nodes of $C_v$. We denote the residue of $\Omega_v$ at $q_e$ to be $r_e$ (possibly zero). We have $r_e=-r_{-e}$ for any $e\in E_C$ by the definition of the extended Hodge bundle $\Omega\overline{\calM}_g$ (see e.g., \cite{HM}).


\subsection{Jump problem} 
Given a stable differential $\Omega$ on the boundary point $C=C_0$, we want to construct a holomorphic differential $\Omega_\us$ on the nearby curve $C_\us$ by an analytic procedure called solving the {\em jump problem}.

\begin{df}\label{jumpproblem} (Jump problem)
The initial data of the jump problem is a collection $\uphi$ of complex-valued continuous 1-forms $\{\phi_e\}$ supported on the seams $\ugamma$ of $\hC_\us$, satisfying the conditions
\begin{equation}\label{jumpcond}
\phi_e=-I_e^*(\phi_{-e}), \qquad \int_{\gamma_e}\phi_e=0, \qquad\forall e\in E_C.
\end{equation}
We call the set $\{\phi_e\}_{e\in E_C}$ {\em jumps}. A {\em solution to the jump problem} is a holomorphic differential $\eta_\us$ on $\hC_\us$ such that it is holomorphic on $\tbC_\us$ and continuous on the boundaries $\ugamma$, satisfying the condition
\begin{equation*}
\eta_\us|_{\gamma_e}-I_e^*(\eta_\us|_{\gamma_{-e}})=-\phi_e\qquad \forall e\in E_C.
\end{equation*}
\end{df}

Note that by letting $\{\phi_e\}_{E_C}$ be the mis-matches $\{\Omega_{v(e)}|_{\gamma_e}-I^*_e(\Omega_{v(-e)}|_{\gamma_{-e}})\}_{E_C}$ of $\Omega$, one can check that they satisfy (\ref{jumpcond}). Therefore $(\Omega+\eta_\us)|_{C_{v(e)}}$ and $(\Omega+\eta_\us)|_{C_{v(-e)}}$ have no jump along $\gamma_e$ at every node $e$, where $\eta_\us$ is the solution to the jump problem with jumps the mis-matches of $\Omega$. We can thus glue them along each seam to obtain a required global differential $\Omega_\us$ on $C_\us$. 
\begin{notat}
For simplicity, we drop the subscript $\us$ in $\eta_\us$ throughout the paper. But it is important to bear in mind that the solution depends on $\us$ as the size of the seams varies with $\us$.
\end{notat}


\subsection{$A$-normalization} \label{subsec:ANorm}

 Note that the solution to the jump problem is never unique: adding any differential on $C_\us$ gives another solution. We need to impose a normalizing condition to ensure the uniqueness of the solution.

On each irreducible component $C_v$ of the nodal curve $C$ we choose and fix a Lagrangian subspace of $H_1(C_v, \ZZ)$, and we also choose and fix a basis of the subspace. In Definition \ref{plumbing}, the plumbed surface $\hC_\us$ is seen to be a subset of $C$. Since the seams (as boundaries of $\hC_\us$) are contractible on each $C_v$ (without boundaries), we know that the classes of the seams $\{[\gamma_{|e|}]\}_{|E|_C}$ together with the union of the basis of the Lagrangian subspaces on the irreducible components span a Lagrangian subspace of $H_1(C_{\us}, \mathbb{Z})$. We can fix this Lagrangian subspace of $H_1(C_{\us}, \mathbb{Z})$ along the plumbing family $\{C_\us\}$. If some $\gamma_{|e|}$ is homologous to zero on $C_\us$, as $s_e$ approaches $0$ the Lagrangian subspace of $H_1(C_{\us}, \mathbb{Z})$ is invariant; if the class of $\gamma_{|e|}$ is non-zero, then the rank of the Lagrangian subspace drops by 1 as the corresponding element in the basis goes to zero. 

We denote this choice of basis as $\{A_{1,\us},\ldots,A_{g,\us}\}$ where the first $m$ cycles $A_{1,\us},\ldots, A_{m,\us}$ generate the subspace spanned by the seams $\{[\gamma_{|e|}]\}_{|E|_C}$. This choice of indexing will be used later in the computation of the period matrices in Section \ref{subsec:permatrix}. 


\begin{df}\label{def:ANor}
A solution to the jump problem is \textit{$A$-normalized} if it has vanishing periods over $A_{1,\us},\ldots,A_{g,\us}$.
\end{df}

Note that this definition only depends on the choice of Lagrangian subspace of $H_1(C_{\us}, \mathbb{Z})$. In particular by our choice of Lagrangian subspace, an $A$-normalized solution $\eta$ must have vanishing periods over the seams: $\int_{\gamma_{|e|}}\eta=0$. 

It is a standard fact (see e.g., \cite{GH}) that any holomorphic $A$-normalized differential is identically zero on a compact Riemann surface without boundaries. Given two $A$-normalized solutions $\eta$ and $\eta'$ on $\hC_\us$ which are both holomorphic by definition, the differential $\eta-\eta'$ has zero jumps on the seams and thus defines a global holomorphic $A$-normalized differential on $C_\us$, which is therefore identically zero. This shows the uniqueness of an $A$-normalized solution.


\section{Variational Formulas for Stable Differentials}\label{sec:mainresult}


In this section we construct the degenerating family $\Omega_\us$ in a plumbing family $C_\us$, and give the variational formula for $\Omega_\us$ in terms of $\us$. As introduced in Section \ref{intro_results}, we plan to construct the solution to the jump problem that matches the jumps of $\Omega_0=\Omega$. In the classical construction, such differentials are obtained by integrating the jumps against the {\em Cauchy kernel} (see the following section) on the whole $C_\us$. In this approach the Cauchy kernel depends on $\us$, and this dependence is implicit and hard to determine.

Alternatively, following \cite{GKN17}, we fix the Cauchy kernels on each irreducible components of the normalization of the limit stable curve $C$. On each component $C_v$ we integrate the jumps $\{\Omega_{v(e)}|_{\gamma_e}-I^*_e(\Omega_{v(-e)}|_{\gamma_{-e}})\}_{e\in E_v}$ against the Cauchy kernel. In this way we obtain a differential in the classical sense on each component $C_v$ which has jumps across the seams. We then restrict it to $\hC_v$, the component minus the ``caps". In this way the original jumps are compensated by the newly-constructed differentials, but these differentials in turn produce new jumps. However, since the $L_\infty$-norms of the newly-constructed differentials along the seams $\gamma_{|e|}$ in local coordinates $z_e$ are controlled in an explicit way by $\us$, the new jumps are also controlled by $\us$. By iterating the process one obtain a sequence of differentials, each term controlled by a higher power of $\us$. This sequence converges to the desired solution to the jump problem.


\subsection{The Cauchy kernels} 

The construction of the $A$-normalized solution to the jump problem is parallel to the construction of the {\em almost real-normalized} solution in \cite{GKN17}, which uses a different normalizing condition, and the solution differential obtained there allows one to control the reality of periods. 

Given a smooth Riemann surface $C'$, the Cauchy kernel is the unique object on $C'\times C'$, satisfying the following properties: 
\begin{enumerate}
\item It is a meromorphic differential of the second kind in $p$ whose only simple poles are at $p=q$ and $p=q_0$ with residue $\pm\frac{1}{2\pi i}$;
\item It is an $A$-normalized differential in $p$: $
\int_{p\in A_i}K_{C'}(p,q)=0$, for $i=1, \ldots, g$ and $\forall q\in C'$.
\end{enumerate}

The Cauchy kernel can be viewed as a multi-valued meromorphic function in $q$ whose only simple pole is at $p=q$. Let $\{A_i,B_i\}$ be a symplectic basis of $H_1(C',\mathbb{Z})$, and let $\{v_i\}$ to be the basis of holomorphic $1$-form dual to the $A$-cycles. The multi-valuedness is precisely as follows (where $q+\gamma$ denotes the value of the kernel at $q$ upon extension around the loop $\gamma$):
$$
K(p,q+A_i)=K(p,q); \qquad K(p,q+B_i)=K(p,q)+v_i(p).
$$

Note that the Cauchy kernel is a section of a line bundle on $C'\times C'$ satisfying the first two normalization conditions above, and therefore it can be written in terms of theta functions and the Abel-Jacobi map (for a reference of the theta function see \cite{Gun76}). We also remark that $K_{C'}$ depends on the choice of the Lagrangian subspace spanned by the $A$ cycles. For completeness below we include the explicit expression for the Cauchy kernel in terms of theta functions: 

\begin{equation*}
K_{C'}(p,q):=\frac{1}{2\pi i}\frac{\partial}{\partial p} \ln\frac{\theta(A(p)-A(q)-Z)}{\theta(A(p)-Z)},
\end{equation*}
where $\theta$ denotes the theta function on the Jacobian of $C'$, $Z$ denotes a general point of the Jacobian, and $A$ denotes the Abel-Jacobi map with some base point $q_0\in Z$. The expression does not depend on the choice of $Z$.

We call $\omega_{C'}(p,q):=2\pi i d_qK_{C'}(p,q)$ the {\em fundamental normalized bidifferential of the second kind} on $C'$ (also known as the {\em Bergman kernel}). Note that the term ``normalized" here means A-normalization. Namely, 
$$
\int_{p\in A_i}\omega_{C'}(p,q)=0, \qquad i=1, \ldots, g.
$$
The fundamental normalized bidifferential has its only pole of second order at $p=q$. It is uniquely determined by its normalization along $A$-cycles, symmetry in the entries, and the bi-residue coefficient along $p=q$. See notes~\cite[Ch.~6]{Ber06} for a review of the Cauchy kernel and fundamental bidifferential. 

When $C$ is a nodal curve with irreducible components $C_v$, we denote by $K_v$ (resp. $\omega_v$) the Cauchy kernel (resp. bidifferential) on each $C_v$. Recall that $z_e$ (or $w_{e}$, when we need to distinguish between two distinct points in the same neighborhood) denotes the local coordinates in some neighborhood $V_e$ of $q_e$ that contains $\overline{U}_e$.  We define a local holomorphic differential $\bK_v\in\Omega^{1,0}(\sqcup_{e\in E_v}V_e\times\sqcup_{e\in E_v}V_e)$, by taking the regular part of $K_v$:
\begin{equation*}
\bK_v(z_e,w_{e'}):=\begin{cases}
   K_v(z_e,w_{e'}) & \mbox{if } e\neq e', \\
   K_v(z_e,w_{e})-\frac{dz_e}{2\pi i(z_e-w_{e})}    & \mbox{if } e=e'.
  \end{cases}
\end{equation*}

Define $\bw_v(z_e,w_{e'})=2\pi id_{w_{e'}}\bK_v(z_e,w_{e'})$, then similarly we have 

\begin{equation*}
\bw_v(z_e,w_{e'})=\begin{cases}
   \omega_v(z_e,w_{e'}) & \mbox{if } e\neq e', \\
   \omega_v(z_e,w_{e})-\frac{dz_edw_e}{(z_e-w_{e})^2}    & \mbox{if } e=e'.
  \end{cases}
\end{equation*}
For future convenience we fix the notation for the coefficients in the expansion of $\bw_v(z_e,w_{e'})$:
\begin{equation}\label{w_exp}
\bw_v(z_e,w_{e'})=:dw_{e'}dz_e\left(\beta^v_{e,e'}+\sum_{i,j\geq 0,i+j>0}\beta^v_{i,j}z_e^iw_{e'}^j\right).
\end{equation}
Clearly we have $\beta^v_{e,e'}=\bw_v(q_e,q_{e'})$. When the context is clear, we drop the superscript $v$ and write simply $\beta_{e,e'}$ instead.


\subsection{Approach to solving the jump problem}

In this section we approach the jump problem directly in order to clarify the appearance of a series expression for local differentials (\ref{mainjumps}) below. 

We first look at the simplest example: $g=0$. On $\PP^1$ the Cauchy kernel is simply $K(z,w)=\frac{dz}{w-z}$. Let $\gamma$ be a Jordan curve bounding a region $R$ on $\PP^1$. Cauchy's integral formula implies that integrating $K(z,w)$ against a differential $f(w)dw$ holomorphic inside the region $R$ along $\gamma$ (negatively oriented with respect to $R$) vanishes when $z$ is in the exterior of $R$; it is equal to $f(z)dz$ when $z\in R$. In other words integrating $f(w)dw$ against the Cauchy kernel defines a differential on $\PP^1\setminus\gamma$ whose jump along $\gamma$ is precise $f(z)dz$. 

When replicating this idea on Riemann surfaces of higher genus, integrating a differential which is holomorphic inside a contractible loop $\gamma$ against the Cauchy kernel produces a holomorphic differential on that Riemann surface whose jump across $\gamma$ is given by the differential. Below $z^+$ is an point outside $\gamma$ and $z^-$ is inside $\gamma$:

$$
\lim_{z^+\to z'\in\gamma}\int_{\gamma}K(z,w)f(w)dw-\lim_{z^-\to z'\in\gamma}\int_{\gamma}K(z,w)f(w)dw=f(z)dz.
$$
This follows directly from Cauchy's integral formula. In~\cite{GKN17}, or in general when integrating against a jump which is not meromorphic, obtaining a result such as above would require the Sokhotski-Plemelj formula (for reference see~\cite{Ro88}).

We would like to explicitly analyze the dependence of the solution to the jump problem on the plumbing parameters. Solving the jump problem on $C_{\us}$ by integrating against the Cauchy kernel on $C_{\us}$, as described above is classical~\cite{Ro88}, but it does not allow one to study the dependence on plumbing parameters. Therefore the approach we take, which was introduced in~\cite{GKN17}, is integration against fixed Cauchy kernels defined individually on each irreducible component of the nodal curve, which are thus independent of $\us$. The result will give an explicit expansion of the solution in $\us$, and constructing this solution is more involved.

The procedure of the construction of the solution is clarified below. 

{\bf Step 1.} We denote the holomorphic part of the differential $\Omega_{v(e)}(z_e)$ as $\xi_{e}^{(0)}(z_e):=\Omega_v(z_e)-\frac{r_edz_e}{z_e}$. 
It follows from the residue condition that the singular parts of the differentials on the opposite sides of the node cancel. Thus the jumps can be written as follows:
$$
\{\Omega_{v(e)}|_{\gamma_e}-I^*_e(\Omega_{v(-e)}|_{\gamma_{-e}})\}_{e\in E_v}=\{\xi_{e}^{(0)}|_{\gamma_e}-I^*_e(\xi_{-e}^{(0)}|_{\gamma_{-e}})\}_{e\in E_v}\;.
$$

{\bf Step 2.} We integrate the jumps against the Cauchy kernel. This integration defines a differential on the open Riemann surface $\tilde{C}_v$, 
\begin{equation*}
\begin{split}
\eta_v^{(1)}(z):=&\sum_{e\in E_v}\int_{z_e\in\gamma_e}K_v(z,z_e)(\xi_{e}^{(0)}|_{\gamma_e}-I^*_e\xi_{-e}^{(0)}|_{\gamma_{-e}})(z_e)\\&=\sum_{e\in E_v}\int_{z_e\in \gamma_e}K_v(z,z_e)I^*_e\xi_{-e}^{(0)}|_{\gamma_{-e}}(z_e)
\end{split}
\end{equation*}
 where the equality follows from Cauchy's integral formula. We also extend $\eta_v^{(1)}(z)$ continuously to the boundary of the plumbing neighborhood. 

We have an important remark here: The differential $\eta_v^{(1)}(z)$ can be seen as our first attempt at solving the jump problem, but it does {\em not} give the solution of the desired jump problem. There is a new jump between $\Omega_{v(e)}+\eta^{(1)}_{v(e)}$ and $\Omega_{v(-e)}+\eta^{(1)}_{v(-e)}$ on each node. The ``error'' comes from the holomorphic part of the Cauchy kernel. 

{\bf Step 3.} We look at this ``error'' explicitly. Locally near the seam $\gamma_{e_0}$, the differential $\eta_v^{(1)}(z_{e_0})$ for $\sqrt{|s_{e_0}|}<|z_{e_0}|<1$ has the following expression:
\begin{equation*}
\begin{split}
\sum_{{e}\in E_v}\int_{z_{e}\in \gamma_{e}}K_v(z_{e_0},z_{e})I^*_{e}\xi_{-e}^{(0)}(z_{e})=&\sum_{e\in E_v}\int_{z_{e}\in\gamma_{e}}\bK_v(z_{e_0},z_{e})I^*_{e}\xi_{-e}^{(0)}(z_{e})+\\&\frac{1}{2\pi i}\int_{w_{e_0}\in\gamma_{e_0}}\frac{dz_{e_0}}{z_{e_0}-w_{e_0}}I^*_{e_0}\xi_{-e_0}^{(0)}(w_{e_0}).
\end{split}
\end{equation*}
Where we recall that $\bK_v$ is the holomorphic part of $K_v$, and the last part involves the integral of the singular part of the Cauchy kernel. The last integral can be evaluated by Cauchy's integral formula by noting that $|s_ez_{e}^{-1}|<\sqrt{s_e}$ where we point out that $I^*$ is orientation reversing, 
\begin{equation}\label{boundarycomp}
\frac{1}{2\pi i}dz_e\int_{w_e\in\gamma_e}\frac{I^*_{e}\xi_{-e}^{(0)}(w_{e})}{z_e-w_e}=-\frac{1}{2\pi i}\frac{dz_{e}}{z_{e}}\int_{w_{-e}\in\gamma_{-e}}\frac{w_{-e}\xi_{-e}^{(0)}(w_{-e})}{w_{-e}-s_ez_{e}^{-1}}=-s_e\frac{dz_{e}}{z_e^2}\tilde{\xi}_{-e}^{(0)}(\frac{s_e}{z_{e}})=I^*\xi_{-e}^{(0)}(z_e)\;.
\end{equation}
Therefore we have the following:
\begin{equation*}
\begin{split}
\{\eta_{v(e)}^{(1)}|_{\gamma_e}-I^*_e\eta^{(1)}_{v(-e)}|_{\gamma_{-e}}\}_{e\in E_v}=&-\{\Omega_{v(e)}|_{\gamma_e}-I^*_e\Omega_{v(-e)}|_{\gamma_{-e}}\}_{e\in E_v}\\&+\Big(\sum_{e'\in E_v}\int_{z_{e'}\in\gamma_{e'}}\bK_{v(e)}(z_e,z_{e'})I^*_{e'}\xi_{-e'}^{(0)}(z_{e'})\\
&-I^*\sum_{e'\in E_{v(-e)}}\int_{z_{e'}\in\gamma_{e'}}\bK_{v(-e)}(z_{-e},z_{e'})I^*_{e'}\xi_{-e'}^{(0)}(z_{e'})\Big)_{e\in E_v}.
\end{split}
\end{equation*}

Thus we see that $\eta_v^{(1)}(z)$ has the desired jump {\em plus} the jump of (local) holomorphic differentials $\sum_{e'\in E_v}\int_{z_{e'}\in\gamma_{e'}}\bK_v(z_e,z_{e'})I^*_{e'}\xi_{-e'}^{(0)}(z_{e'})$, which is exactly the ``error''. 

{\bf Step 4.} We give an estimate on the size of the ``error''. We show in Lemma \ref{series} below that the $L^\infty$-norm of the ``error'' is controlled by the $L^\infty$-norm of the plumbing parameters $|\us|$. We therefore apply the jump problem {\em again} to further reduce the gap. Finally, our approach to solving the jump problem is by integrating against a series constructed from the recursively appearing jump problems. We prove (in Lemma \ref{series}) that the ``errors'' produced in the $k$-th step of the recursion is controlled by the $k$-th power of $|\us|$, and we use this to show the convergence of the desired solution of the jump problem.

In the following section we first define the ``errors'' $\xi_{e}^{(k)}(z_e)$ in each step, and then we prove Lemma \ref{series} which bounds each by a power of the plumbing parameters, thus the series defined by adding the ``errors" converges. And at last we prove that the solution to the jump problem, denoted $\eta_v$, is the result of integrating this series against the Cauchy kernel on each irreducible component.


\subsection{Construction of the A-normalized solution to the jump problem} 

We construct the A-normalized solution to the jump problem as suggested by the computation above, namely we define local holomorphic differentials, which can be understood as the recursively appearing jumps, and show the series converges. The resulting local differentials are such that when integrated against the Cauchy kernel on each irreducible component of the nodal curve, the jump is given by the first term in the series. 

Let $\Omega$ be a stable differential on the stable curve $C$. We can define recursively the following collection of \textit{holomorphic} differentials described \textit{locally} in the neighborhood of each node :
\begin{equation}\label{mainjumps}
\begin{split}
k=0: \xi_{e}^{(0)}(z_e)&:=\Omega_v(z_e)-\frac{r_edz_e}{z_e};\\
k>0: \xi_{e}^{(k)}(z_e)&:=\sum_{e'\in E_v}\int_{w_{e'}\in\gamma_{e'}}\bK_{v}(z_e,w_{e'})\cdot I^*_{e'}\xi_{-e'}^{(k-1)}(w_{e'}).
\end{split}
\end{equation}

Note $\xi_e^{(k)}$ for $k>0$ depend on $\us$ as $\gamma_e$ and $I^*$ depend on $\us$. We suppress this in the notation. 

Let $\Gamma_C$ be the dual graph of $C$. Let $l^k:=(e_1\ldots,e_{k})$ be an oriented path of length $k$ in the dual graph, starting from the vertex $v=v(e_1)$. We denote $L_v^k$ the collection of all such paths starting from the vertex $v$. We remark that given the Cauchy kernels on each component, the differential $\xi_{e_1}^{(k)}(z_{e_1})$ is determined by the collection of local differentials $\{\xi_{-e_{k}}^{(0)}(w_{-e_k})\}_{L_v^k}$ where $e_k$ is the ending edge of $l^k\in L_v^k$. 

We define $\xi_e(z_e):=\sum_{k=0}^\infty\xi_e^{(k)}(z_e)$. Since $\bK_v(z_e,w_{e'})$ is holomorphic in the first variable, we have $\int_{\gamma_e}\xi_e^{(k)}=0$ for any $e$ and $k$, therefore
\begin{equation}\label{zeroseam_xi}
\int_{\gamma_e}\xi_e=0, \qquad \forall e\in E_C.
\end{equation}
The convergence of this series is ensured by the following lemma, whose proof follows very much along the lines of \cite{GKN17}. We include the proof here for completeness. The essential ingredient of the proof is the fact that our Cauchy kernels and the bidifferentials are defined on the irreducible components, thus they are independent of the plumbing parameters $\us$.

\begin{notat}
\begin{enumerate}
\item Throughout the paper we use the tilde notation to denote the function corresponding to a given differential in a given local coordinate chart, for instance $\bK(z,w)=:\tbK(z,w)dz$, $\bw(z,w)=:\tbw(z,w)dzdw$, and also $\xi_e^{(k)}(z_e):=\txi_e^{(k)}(z_e)dz_e$. 

\item To simplify notation, we denote 
\begin{equation}\label{tildexi}
\txi_{e}:=\txi^{(0)}_e(q_e)
\end{equation}
 at every node $q_e$. 
 
\item When the function $\tw_v(z,w)$ of the bidifferential $\omega_v(z,w)$ is evaluated in the second variable at any node $q_e$, by an abuse of notation, we write $\omega_v(z,q_e)=\tw_v(z,q_e)dz$ for $z\in\tC_v$.

\item Recall that $|s|:=\max_{e\in E_C}|s_e|$. For future convenience, for any collection of holomorphic functions on the unit disks neighborhood at each node $\underline f:=\{f_e\in \mathcal{O}(V_e)\}_{e\in E_C}$, we define the following $L^\infty$-norms:
\begin{equation*}
|f_e|_{\us}:=\sup_{z_e\in\gamma_e}|f_e(z_e)|; \qquad |\underline f|_\us:=\max_{e\in E_C}|f_e|_\us.
\end{equation*}
\end{enumerate}
\end{notat}

Moreover by the Schwarz lemma on $U_e=\{|z_e|<\sqrt{|s_e|}\}$ we have that $|\underline f|_\us\leq|\underline f|_{\underline{1}}\sqrt{|\us|}^{\ord\underline f}$, where $\ord\underline f:=\min_{e\in E_C}\ord_{q_e}f_e$.


\begin{lm}\label{series}
For sufficiently small $\us$, there exists a constant $M_1$ independent of $\us$, such that the following estimate holds:
\begin{equation}\label{xi_bound}
|\underline\txi^{(k)}|_{\us}\leq(|\us|M_1)^k|\underline\txi^{(0)}|_{\us}.
\end{equation}
In particular, the local differential $\xi_e(z_e)$ is a well-defined holomorphic differential at each node $e\in E_C$. 
\end{lm}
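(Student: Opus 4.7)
The plan is by induction on $k$, extracting a factor of $|\us|$ at each step via a combined change-of-variables, Taylor-expansion, and residue argument. First I would rewrite (\ref{mainjumps}) via the substitution $w_{e'}=s_{e'}/u$, which removes the pullback $I^*_{e'}$ and takes $\gamma_{e'}$ to $-\gamma_{-e'}$:
\[
\txi_e^{(k)}(z_e)=-\sum_{e'\in E_v}\int_{\gamma_{-e'}}\tbK_v(z_e,s_{e'}/u)\,\txi^{(k-1)}_{-e'}(u)\,du.
\]
Then I would Taylor expand $\tbK_v(z_e,w)=\sum_{n\geq 0}b_n(z_e)w^n$ in the second variable around $0$ and interchange sum and integral, obtaining
\[
\txi_e^{(k)}(z_e)=-\sum_{e'\in E_v}\sum_{n\geq 0}b_n(z_e)\,s_{e'}^{\,n}\int_{\gamma_{-e'}}u^{-n}\,\txi^{(k-1)}_{-e'}(u)\,du.
\]
The $n=0$ term vanishes by (\ref{zeroseam_xi}); this cancellation is what forces every surviving term to carry at least one factor of $s_{e'}$, giving the linear rate $|\us|$ claimed in the lemma.

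For $n\geq 1$, the residue theorem identifies the seam integral with $2\pi i$ times the $(n-1)$st Taylor coefficient of $\txi^{(k-1)}_{-e'}$ at $0$, well defined because $\xi^{(k-1)}_{-e'}$ is holomorphic at $q_{-e'}$ (by construction for $k\geq 2$, and by the residue cancellation in $\xi^{(0)}_{-e'}=\Omega_{v(-e')}-r_{-e'}dz_{-e'}/z_{-e'}$ for $k=1$). I would bound this coefficient by applying Cauchy's formula on the \emph{shrinking} seam circle $|u|=\sqrt{|s_{e'}|}$ itself, yielding a bound proportional to $|\txi^{(k-1)}_{-e'}|_{\us}/|s_{e'}|^{(n-1)/2}$. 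Combined with the fixed-scale bound $|b_n(z_e)|\leq M_\bK/r_0^{\,n}$ from Cauchy's formula on a circle $|w|=r_0$ where $\tbK_v$ is jointly holomorphic (with $r_0$ independent of $\us$), this gives
\[
|\txi_e^{(k)}(z_e)|\leq 2\pi M_\bK\sum_{e'\in E_v}|\txi^{(k-1)}_{-e'}|_{\us}\sum_{n\geq 1}\frac{|s_{e'}|^{(n+1)/2}}{r_0^{\,n}}
\]
for $z_e\in\gamma_e$.

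The inner geometric series is dominated by its $n=1$ term $|s_{e'}|/r_0$, so for $|\us|$ sufficiently small one obtains $|\txi_e^{(k)}|_{\us}\leq M_1|\us|\,|\underline\txi^{(k-1)}|_{\us}$ uniformly in $e$, where $M_1$ depends only on $M_\bK$, $r_0$, and $\max_v|E_v|$. Iterating yields (\ref{xi_bound}), and geometric convergence of $\sum_k\xi^{(k)}_e$ then makes $\xi_e$ a well-defined holomorphic local differential whenever $|\us|M_1<1$. The main obstacle is that the naive $L^\infty$-on-seam estimate—bounding the integrand by its supremum and multiplying by the seam length $2\pi\sqrt{|s_{e'}|}$—yields only a factor of $\sqrt{|\us|}$ per iteration and misses the claimed $|\us|$. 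Recovering the sharp rate requires both exploiting (\ref{zeroseam_xi}) to kill the otherwise dominant $n=0$ term and evaluating Cauchy's formula on the shrinking seam circle itself, where the $|s_{e'}|^{-(n-1)/2}$ blowup cancels precisely against the $s_{e'}^{\,n}$ from $\tbK_v(z_e,s_{e'}/u)$, producing the sharp single factor $|s_{e'}|$ at $n=1$.
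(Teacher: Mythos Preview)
Your argument is correct and rests on the same key idea as the paper: the naive $L^\infty$ bound over the seam only gives $\sqrt{|\us|}$, and the extra factor comes from using (\ref{zeroseam_xi}) to eliminate the constant term of $\tbK_v$ in its second variable. The paper executes this more directly: it simply subtracts $\tbK_v(z_e,0)$ inside the integral (legitimate by (\ref{zeroseam_xi})), invokes the Lipschitz bound $|\tbK_v(z_e,w_{e'})-\tbK_v(z_e,0)|<M_2|w_{e'}|$, and then tracks the explicit $s_{e'}/w_{e'}^2$ from the pullback $I^*_{e'}$ to arrive at the single factor $|s_{e'}|$ in one line. Your route---change variables to remove $I^*_{e'}$, Taylor expand $\tbK_v$ fully, evaluate each seam integral as a residue, and sum the resulting geometric series---is a more computational unpacking of the same mechanism; your $n=0$ cancellation is exactly the paper's subtraction of $\tbK_v(z_e,0)$, and your $n=1$ term recovers the paper's Lipschitz estimate. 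The payoff of your version is that it makes the full $\us$-expansion of $\xi^{(k)}_e$ visible term by term (foreshadowing Proposition~\ref{expansion}), at the cost of a longer argument; the paper's version is shorter but less explicit about the higher-order structure.
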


\begin{proof}
For all $v$ and all $e, e'\in E_v$, the Cauchy kernel $\tK_v(z_e,w_{e'})$ is analytic in both variables and independent of $\us$. Thus there exists a uniform constant $M_2$ (independent of $\us$) such that for any $z_e\in V_e$,
\begin{equation*}
|\tbK_v(z_e,w_{e'})-\tbK_v(z_e,0)|<M_2|w_{e'}|.
\end{equation*}
This in turn implies 
\begin{equation}\label{K_bound}
\max_{w_{e'}\in\gamma_{e'}}\left|\frac{\tbK(z_e,w_{e'})-\tbK(z_e,0)}{w_{e'}^2}\right|<\frac{M_2}{\sqrt{|s_{e'}|}}.
\end{equation}
By (\ref{zeroseam_xi}), we have
\begin{equation*}
\begin{split}
\left|\int_{w_{e'}\in\gamma_{e'}}\tbK_v(z_e,w_{e'})I^*_{e'}\xi_{-e'}^{(k-1)}({w_{e'}})\right|=\left|\int_{w_{e'}\in\gamma_{e'}}\left[\tbK_v(z_e,w_{e'})-\tbK_v(z_e,0)\right]I^*_{e'}\xi_{-e'}^{(k-1)}({w_{e'}})\right|\\
=|s_{e'}|\int_{w_{e'}\in\gamma_{e'}}\left|\frac{\tbK(z_e,w_{e'})-\tbK(z_e,0)}{w_{e'}^2}\right|\left|I^*_{e'}\txi_{-e'}^{(k-1)}({w_{e'}})\right|dw_{e'}<|s_{e'}|M_2\cdot2\pi|\txi_{-e'}^{(k-1)}|_{\us}
\end{split}
\end{equation*}

The second equality is the result of pulling back $dw_{-e'}$. Note that the last inequality is due to the fact that for $w_{e'}\in\gamma_{e'}$, we have $|I^*_{e'}\txi_{-e'}^{(k-1)}({w_{e'}})|=|\txi_{-e'}^{(k-1)}(\frac{s_{e'}}{w_{e'}})|\leq|\txi^{(k-1)}_{-e'}|_\us$, therefore the integration over $\gamma_{e'}$ gives a ${\sqrt{|s_{e'}|}}$ that cancels the one in (\ref{K_bound}).
By definition of $\txi_e^{(k)}$, there exists a constant $M_1$ independent of $\us$ and $k$ such that,
\begin{equation*}
|\txi_e^{(k)}|_{\us}\leq|\us|M_1\max_{e'\in E_{v(e)}}|\txi_{-e'}^{(k-1)}|_{\us}<|\us|M_1|\underline\txi^{(k-1)}|_{\us}.
\end{equation*}

Note that the RHS is independent of $e$ and $v$, we can thus pass to the maximum over $e\in E_C$ of the LHS and obtain $|\underline\txi^{(k)}|_{\us}<|\us|M_1|\underline\txi^{(k-1)}|_{\us}$. By induction, we have the desired estimate (\ref{xi_bound}).

When $|\us|<2M_1^{-1}$, the geometric series $|\underline\txi|_{\us}:=\sum_{k=0}^\infty|\underline\txi^{(k)}|_{\us}$ converges to a limit bounded by $\left(1+\frac{|\us| M_1}{1-|\us|M_1}\right)|\underline\txi^{(0)}|_{\us}<2|\underline\txi^{(0)}|_{\us}<2\sqrt{|\us|}^{\ord\underline\txi^{(0)}}|\underline\txi^{(0)}|_{\underline 1}$. We therefore conclude that the local differential $\xi_e(z_e)$ is analytic in $\us$.
\end{proof}


We now construct the solution to the jump problem with initial data $\{\Omega_v(e)|_{\gamma_e}-I^*_e(\Omega_{v(-e)}|_{\gamma_{-e}})\}$. We define the following differential on $\hC_v$:
\begin{equation}\label{mainsol1}
\eta_v(z)=\sum_{e\in E_v}\int_{z_e\in \gamma_e}K_v(z,z_e)I^*_e\xi_{-e}(z_e).
\end{equation}
where $z\in \tC_v$. By extending continuously to the seams, the differential $\eta_v$ is defined on $\hC_v$. 

Recall $\xi_e(z_e):=\sum_{k=0}^\infty\xi_e^{(k)}(z_e)$. For future use we denote, 
\begin{equation}\label{mainsol}
\eta_{v}^{(k)}(z):=\sum_{e\in E_v}\int_{z_e\in\gamma_e}K_v(z,z_e)\cdot I^*_e\xi_{-e}^{(k-1)}(z_{e}).
\end{equation}
In this notation we have $\eta_v:=\sum_{k=1}^\infty \eta_v^{(k)}$.

We claim the differentials $\eta_v(z)$ are single-valued. This follows from noticing the multi-valuedness of $K(z,z_e)$ along $B_i$ depends exclusively on $z$, and thus any multi-valuedness is canceled after integration against $I^*_e\xi_{-e}$ by (\ref{zeroseam_xi}).
$$
\int_{z_e\in\gamma_e}K_v(z+B_i,z_e))\cdot I^*_e\xi_{-e}(z_{e})-\int_{z_e\in\gamma_e}K_v(z,z_e))\cdot I^*_e\xi_{-e}(z_{e})=v_i(z)\int_{z_e\in\gamma_e}I^*_e\xi_{-e}(z_{e})=0
$$

Although the Cauchy kernel $K_v$ has a simple pole with residue $(2\pi i)^{-1}$ at the base point $q_0$, it follows from (\ref{zeroseam_xi}) that $\eta_{v}(z)$ is holomorphic at $q_0$ and hence defines a holomorphic differential on $\tbC_v$. Let $\gamma_{q_0}$ be a small loop around the point $q_0$. Here we verify integrating $\eta_v$ along $\gamma_{q_0}$ is zero. The paths $\gamma_{q_0}$ does not intersect any $\gamma_e$, and we could exchange the order of integration in $z$ and $z_e$. The integral of $K_v(z,z_e)$ along $z\in\gamma_{q_0}$ is $(2\pi i)^{-1}$ for any $q$. Thus by (\ref{zeroseam_xi}) integrating the result times $I^*_e\xi_{-e}(z_e)$ along $\gamma_e$ is zero. 


We recall that the $L_2$-norm of a holomorphic differential $\omega$ on a smooth Riemann surface $C'$ is given by $||\omega||_{L_2}:=\frac{i}{2}\int_{C'}\omega\wedge\overline{\omega}$. Note that both $\xi_e(z_e)$ and $\eta_v(z)$ implicitly depend on $\us$. The following theorem establishes an $L^2$ bound on $\eta_v$, and shows that it is the desired solution to the jump problem.


\begin{thm}\label{main}
Let $C$ be a stable nodal curve with irreducible components $C_v$, $\Omega$ a stable differential on $C$. Let $\Omega_v$ be the restriction of $\Omega$ on $C_v$. For $|\us|$ small enough, $\{\eta_{v}\}$ is the unique A-normalized solution to the jump problem with jump data $\Omega_{v(e)}|_{\gamma_e}-I^*_e(\Omega_{v(-e)}|_{\gamma_{-e}})$. 
Moreover, there exists a constant $M$ independent of $v$ and $\us$, such that the following $L^2$-bound of the solution holds:
\begin{equation}\label{sol_bound}
||\eta_v||_{L^2}<\sqrt{|\us|}^{1+\ord\underline\txi^{(0)}}M|\underline\txi^{(0)}|_{\underline 1}.
\end{equation}
Therefore $\{\Omega_{v,\us}:=\Omega_v+\eta_{v}\}$ defines a holomorphic differential when all $|s_e|>0$, denoted $\Omega_\us$ on $C_\us$, satisfying $\Omega_v=\lim_{\us\to 0}\Omega_\us|_{C_v}$ uniformly on compact sets of $C_v\setminus\cup_{e\in E_v}q_e$.
\end{thm}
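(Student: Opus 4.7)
My plan is to verify, in this order, that $\eta_v:=\sum_{k=1}^\infty \eta_v^{(k)}$ defined by (\ref{mainsol}) (a) converges, (b) solves the jump problem with the prescribed jump data $\Omega_{v(e)}|_{\gamma_e}-I^*_e(\Omega_{v(-e)}|_{\gamma_{-e}})$, (c) is $A$-normalized, (d) satisfies the $L^2$-bound (\ref{sol_bound}). Convergence and the later geometric decay estimates are driven by Lemma~\ref{series}. Uniqueness among $A$-normalized solutions is already recorded in the paragraph after Definition~\ref{def:ANor}, and gluing $\Omega_v+\eta_v$ across the identified seams to a global $\Omega_\us$ is immediate from (b).

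\textbf{Jump condition by telescoping.} The central observation is that the Cauchy integral computation (\ref{boundarycomp}) generalizes to every level of the recursion. For each $k\ge 1$ I would split $K_v=\bK_v+(2\pi i(w_e-z_e))^{-1}dz_e$ in the $e''=e$ term of (\ref{mainsol}); Cauchy's formula applied to the singular half yields $I^*_e\xi_{-e}^{(k-1)}(z_e)$ for $z_e$ on the exterior of $\gamma_e$, while the regular half together with the $e''\neq e$ contributions is precisely $\xi_e^{(k)}(z_e)$ by (\ref{mainjumps}). Hence $\eta_v^{(k)}|_{\gamma_e}=\xi_e^{(k)}+I^*_e\xi_{-e}^{(k-1)}$, and a symmetric expression holds on $\gamma_{-e}$; subtracting gives
\begin{equation*}
\eta_v^{(k)}|_{\gamma_e}-I^*_e\eta_{v(-e)}^{(k)}|_{\gamma_{-e}} = \bigl(\xi_e^{(k)}-I^*_e\xi_{-e}^{(k)}\bigr)-\bigl(\xi_e^{(k-1)}-I^*_e\xi_{-e}^{(k-1)}\bigr).
\end{equation*}
Summing in $k$ telescopes the right-hand side, and Lemma~\ref{series} forces the tail $\xi_e^{(N)}\to 0$ uniformly on $\gamma_e$. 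What remains is $-(\xi_e^{(0)}-I^*_e\xi_{-e}^{(0)})$, which, by the residue cancellation built into the definition of $\xi_e^{(0)}$, equals the required jump $-(\Omega_{v(e)}|_{\gamma_e}-I^*_e\Omega_{v(-e)}|_{\gamma_{-e}})$.

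\textbf{$A$-normalization and $L^2$-bound.} Since $K_v(z,z_e)$ is $A$-normalized in $z$ along the fixed basis of $H_1(C_v,\ZZ)$, swapping the order of integration shows that each $\eta_v^{(k)}$, and hence $\eta_v$, has zero periods over all these cycles. For the seam cycles $\gamma_{|e|}$ spanning the remaining $m$ basis elements of the Lagrangian, I would use the formula $\eta_v|_{\gamma_e}=\xi_e+I^*_e\xi_{-e}$ derived above: $\int_{\gamma_e}\xi_e=0$ by (\ref{zeroseam_xi}), and $\int_{\gamma_e}I^*_e\xi_{-e}=-\int_{\gamma_{-e}}\xi_{-e}=0$ since $I_e$ reverses orientation. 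For the $L^2$-bound, I would estimate each $\eta_v^{(k)}$ pointwise via (\ref{mainsol}) using $|I^*_e\xi_{-e}^{(k-1)}|_\infty\le|\underline\txi^{(k-1)}|_\us$, the arclength $2\pi\sqrt{|s_e|}$ of $\gamma_e$, and an $L^2(\hC_v)$-bound on $z\mapsto K_v(z,z_e)$ that is uniform in $z_e\in\gamma_e$. Combined with Lemma~\ref{series} and the Schwarz bound $|\underline\txi^{(0)}|_\us\le |\underline\txi^{(0)}|_{\underline 1}\sqrt{|\us|}^{\ord\underline\txi^{(0)}}$, this yields $\|\eta_v^{(k)}\|_{L^2}\le C\sqrt{|\us|}^{1+\ord\underline\txi^{(0)}}(|\us|M_1)^{k-1}|\underline\txi^{(0)}|_{\underline 1}$, and summing the geometric series delivers (\ref{sol_bound}).

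\textbf{Main obstacle.} The genuine technical difficulty is the $L^2$-estimate near the seams: the naive pointwise bound on $|K_v(z,z_e)|$ blows up as $z$ approaches $z_e$, so a little care is needed. I would decompose $\hC_v$ into a compact piece bounded away from all nodes (where the estimate is trivial) and a tubular neighborhood of each seam, where I would split $K_v=\bK_v+(2\pi i(z-w))^{-1}dz$ and treat each half separately, using again the vanishing (\ref{zeroseam_xi}) to recover the decisive extra factor of $|s_e|$ exactly as in the proof of Lemma~\ref{series}. Once (\ref{sol_bound}) is established, the remaining conclusions are immediate: uniqueness follows from the discussion after Definition~\ref{def:ANor}; gluing $\Omega_v+\eta_v$ along the seams to a global holomorphic $\Omega_\us$ on $C_\us$ is the jump condition itself; and uniform convergence $\Omega_\us|_{C_v}\to\Omega_v$ on a compact subset $K\subset C_v\setminus\bigcup_{e\in E_v}q_e$ follows from the pointwise bound $|\eta_v^{(k)}(z)|=O\bigl(\sqrt{|\us|}^{1+\ord\underline\txi^{(0)}}(|\us|M_1)^{k-1}\bigr)$ uniform in $z\in K$.
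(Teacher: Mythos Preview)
Your argument for the jump condition (telescoping via the local identity $\eta_v^{(k)}|_{\gamma_e}=\xi_e^{(k)}+I^*_e\xi_{-e}^{(k-1)}$), for $A$-normalization, for uniqueness, and for uniform convergence on compacts is correct and essentially identical to the paper's Steps~1--2.

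The one genuine difference is in the $L^2$-bound. You propose to decompose $\hC_v$ into a compact core plus annular collars and estimate $\|\eta_v^{(k)}\|_{L^2}$ directly on each piece; this can be pushed through (in the collar one lands on $\eta_v^{(k)}=\xi_e^{(k)}+I^*_e\xi_{-e}^{(k-1)}$ and computes $\|I^*_e\xi_{-e}^{(k-1)}\|_{L^2(\text{annulus})}\le C\sqrt{|s_e|}\,|\underline\txi^{(k-1)}|_\us$ from the Jacobian $|s_e|/|z_e|^2$), but your description is slightly off: the decisive $\sqrt{|s_e|}$ for the singular half comes from this $L^2$ computation, not from~(\ref{zeroseam_xi}). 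The paper avoids all of this with a one-line Stokes argument. Writing $\pi_v(z)=\int_{z_0}^z\eta_v$ with $z_0\in\gamma_e$, one has
\[
\|\eta_v\|_{L^2}^2=\tfrac{i}{2}\int_{\hC_v}\eta_v\wedge\overline{\eta_v}=\sum_{e\in E_v}\int_{\gamma_e}\overline{\pi}_v\,\eta_v,
\]
so the interior $L^2$ integral is converted to a boundary integral over the seams, where the local formula you already derived gives $|\teta_v|_\us\le M''|\underline\txi^{(0)}|_\us$ and $|\pi_v|_\us\le 2\pi\sqrt{|\us|}\,|\teta_v|_\us$; multiplying and summing yields (\ref{sol_bound}) immediately. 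This sidesteps the blow-up of $K_v$ near the diagonal entirely and is both shorter and more robust than the collar decomposition.
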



\begin{proof}

{\bf Step 1.} We first show that the solutions $\eta_v$ are A-normalized. Recall that our choice of the maximal Lagrangian subspace of $H_1(C_\us,\ZZ)$ contains the subspace generated by the classes of the seams $\gamma_{|e|}$. By the fact that $\{K_v(z,z_e)\}_{v,e}$ are A-normalized in the first variable, the integral $\int_{z\in A_{i,\us}}\eta_v^{(k)}(z)$ is zero if the class $[A_{i,\us}]$ does not belong to the span of the seams $\{[\gamma_{|e|}]\}$. 

In order to compute the integrals of $\eta_v^{(k)}$ along the seams, we compute the local expression for $\eta^{(k)}_v$ in the neighborhood $\{|\sqrt{s_e}|<|z_e|<1\}$. Note that the Cauchy kernel $K_v(z_e,w_{e'})$ is holomorphic if $e \neq e'$, and it has a singular part $\frac{dz}{2\pi i(z_e-w_e)}$ when both variables are in the neighborhood of the same nodes. Therefore we have

\begin{equation}\label{SP_fml}
\begin{split}
\eta^{(k)}_v(z_e)&=\frac{dz_e}{2\pi i}\int_{w_e\in\gamma_e}\frac{1}{z_e-w_e}I^*_e\xi^{(k-1)}_{-e}(w_e)+\sum_{e'\in E_v}\int_{w_{e'}\in \gamma_{e'}}\bK_v(z_e,w_{e'})I^*_{e'}\xi_{-e'}(w_{e'}).\\
&=\frac{dz_e}{2\pi i}\int_{w_e\in\gamma_e}\frac{1}{z_e-w_e}I^*_e\xi^{(k-1)}_{-e}(w_e)+\xi^{(k)}_e(z_e)\\
&=\left(I^*_e\xi^{(k-1)}_{-e}+\xi^{(k)}_e\right)(z_e),
\end{split}
\end{equation} 
the equality follows from Cauchy's integral formula, see (\ref{boundarycomp}) for details. Note that $\eta_v^{(k)}$ admits continuous extension to the boundary $\gamma_e$. By this expression and property (\ref{zeroseam_xi}), we conclude that $\int_{z\in\gamma_e}\eta_v^{(k)}(z)=0$ and hence the solution $\eta_v$ is A-normalized.

{\bf Step 2.} We show that the differentials $\{\Omega_{v,\us}\}$ have zero jumps among the seams $\ugamma=\{\gamma_e\}_{e\in E_C}$. It is sufficient to prove
\begin{equation}\label{zerojump}
\left(\Omega_{v(e)}-I^*_e\Omega_{v(-e)}\right)|_{\gamma_e}(z_e)=-\sum_{k=1}^\infty\left(\eta^{(k)}_{v(e)}-I^*_e\eta^{(k)}_{v(-e)}\right)|_{\gamma_e}(z_e).
\end{equation}

First we note that by the opposite residue condition ($r_e=-r_{-e}$) the singular parts of $\Omega_{v(e)}$ and $I^*_e\Omega_{v(-e)}$ cancels, therefore we have $\left(\Omega_{v(e)}-I^*_e\Omega_{v(-e)}\right)|_{\gamma_e}(z_e)=\left(\xi^{(0)}_e-I^*_e\xi^{(0)}_{-e}|_{\gamma_e}\right)(z_e)$. 

For $k\geq1$, by (\ref{SP_fml}) the jumps along the identified seams for each terms $\eta_v^{(k)}$ can be analyzed.
\begin{equation*}
\begin{split}
\left(\eta^{(k)}_v-I^*_e\eta^{(k)}_{v(-e)}\right)(z_e)&=\left(I^*_e\xi^{(k-1)}_{-e}+\xi^{(k)}_e-\xi^{(k-1)}_e-I^*_e\xi^{(k)}_{-e}\right)(z_e)\\
&=\left(\xi^{(k)}_e-I^*_e\xi^{(k)}_{-e}\right)(z_e)-\left(\xi^{(k-1)}_e-I^*_e\xi^{(k-1)}_{-e}\right)(z_e).
\end{split}
\end{equation*}

Therefore $\sum_{k=1}^\infty\left(\eta^{(k)}_{v(e)}-I^*_e\eta^{(k)}_{v(-e)}\right)|_{\gamma_e}(z_e)=-\left(\xi^{(0)}_e-I^*_e\xi^{(0)}_{-e}|_{\gamma_e}\right)(z_e)$, and we have shown (\ref{zerojump}).

{\bf Step 3.} We want to prove the $L^2$-bound (\ref{sol_bound}) for the solution. We take the $L^\infty$ norm of $\teta_v^{(k)}(z_e):=\eta_v^{(k)}(z_e)/dz_e$ on the seams. By (\ref{SP_fml}) we have
\begin{equation}\label{conv}
|\teta_v^{(k)}|_\us:=\max_{|z_e|=|\sqrt{s_e}|}|\teta_v^{(k)}(z_e)|\leq |I^*_e\txi_{-e}^{(k-1)}(z_e)|_\us+|\txi_e^{(k)}|_\us.
\end{equation}

By Lemma \ref{series}, we know that for any $k\geq 1$, there exists a constant $M'$ such that $|\teta_v^{(k)}|_\us<(M'|\us|)^{k-1}|\txi^{(0)}|_\us$. By the summing the series, we have $|\teta_v|_\us<M''|\txi^{(0)}|_\us$ for some constant $M''$.

Take any base point $z_0\in C_v$, define $\pi_v(z):=\int_{z_0}^z\eta_v$. Then since $d\pi_v=\eta_v$, by Stokes theorem, we have 
$$
||\eta_v||^2_{L^2}=\frac{i}{2}\int_{\hat{C}_v}\eta_v\wedge\overline{\eta_v}=\sum_{e\in E_v}\int_{\gamma_e}\overline{\pi}_v\eta_v<M''|\txi^{(0)}|_\us\sum_{e\in E_v}\int_{z_e\gamma_e}|\overline{\pi}_v(z_e)| dz_e.
$$
Since $\eta_v$ is bounded on $\gamma_e$, by taking $z_0\in \gamma_e$ the length of arc from $z_0$ to $z_e\in\gamma_e$ is at most $2\pi\sqrt{|\us|}$. Therefore we can bound $|\overline\pi_v|_\us=|\pi_v|_\us$ by $2\pi\sqrt{|\us|}|\teta_v|_\us=2\pi M''\sqrt{|\us|}|\txi^{(0)}|_\us$. At last we have
$$
||\eta_v||^2_{L^2}<|\us|\cdot(2\pi M''|\txi^{(0)}|_\us)^2\cdot \#E_v.
$$
Thus by letting $M:=2\pi M''\sqrt{\max_v\#E_v}$, since $|\underline\txi^{(0)}|_\us\leq|\underline\txi^{(0)}|_{\underline 1}\sqrt{|\us|}^{\ord\underline\txi^{(0)}}$ we have the required $L^2$-bound (\ref{sol_bound}) for $||\eta_v||_{L^2}$. 

Note that for holomorphic differentials, convergence in $L_2$ sense implies uniform convergence on compact sets. Therefore we conclude that $\Omega_v=\lim_{\us\to 0}\Omega_\us|_{C_v}$ uniformly on compact sets of $C_v\setminus\cup_{e\in E_v}q_e$.

Lastly, the holomorphicity of $\Omega_{v,\us}$ for $\us>0$ follows from the holomorphicity of $\Omega_v$ away from the nodes and the holomorphicity of $\eta_v$ on $\tilde{C}_v$. Recall the Cauchy kernel is holomorphic in $\tilde{C}_v$ except at $q_0$, and we've verified that $\eta_v$ does not have a pole at $q_0$.
\end{proof}

In fact, by construction (\ref{mainsol}) we can compute the $\us$-expansion of each summand $\eta_v^{(k)}$ explicitly, and thus the $\us$-expansion of the differential $\Omega_{\us}$. For future applications and comparisons to earlier works, we compute the first term in the $\us$-expansion for each summand.


\begin{prop}\label{expansion}
Let $l^k=(e_1,\ldots, e_k)\in L_v^k$ be a path of length $k$ in $\Gamma_C$ starting from a given vertex $v=v(e_1)$. Denote $s(l^{k})=\prod_{i=1}^{k}s_{e_i}$, and $\beta(l^k)=\prod_{j=1}^{k-1}\beta_{-e_j,e_{j+1}}$. Then the expansion of $\eta_v^{(k)}$ is given by
\begin{equation}\label{sol_exp}
\eta_v^{(k)}(z)=(-1)^{k}\sum_{l^k\in L_v^{k}}s(l^k)\cdot \omega_v(z,q_{e_1})\beta(l^k)\txi_{-e_k}+O(|\us|^{k+1}),
\end{equation}
where $z\in \hC_v$, $\beta_{e,e'}$ is defined in (\ref{w_exp}), and $\txi_{e}$ is defined in (\ref{tildexi}).
\end{prop}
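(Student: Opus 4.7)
The plan is to prove (\ref{sol_exp}) by induction on $k$, simultaneously tracking a leading-order expansion for the local differentials $\xi_e^{(k)}$ at the nodes. The key tool at each step is a residue calculation applied to the integrals (\ref{mainjumps}) and (\ref{mainsol}), made effective by the vanishing property $\int_{\gamma_e}\xi_e^{(k-1)}=0$ from (\ref{zeroseam_xi}).

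For $k=1$, substitute $I_e^*\xi_{-e}^{(0)}(z_e)=-s_e\txi_{-e}^{(0)}(s_e/z_e)\,dz_e/z_e^2$ into (\ref{mainsol}) and Taylor-expand $\tK_v(z,z_e)$ in $z_e$ at $0$. The constant-in-$z_e$ piece integrates to zero by (\ref{zeroseam_xi}); the linear piece, evaluated by the residue theorem at $z_e=0$, yields $-s_e\,\omega_v(z,q_e)\,\txi_{-e}$ via the relation $\omega_v(z,q_e)=2\pi i\,\partial_w\tK_v(z,0)\,dz$. Higher-order Taylor terms of $\tK_v$ combined with the Laurent expansion of $\txi_{-e}^{(0)}(s_e/z_e)/z_e^2$ contribute only $O(|\us|^2)$.

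For the inductive step, an identical residue computation applied to (\ref{mainjumps}), with $K_v$ replaced by $\bK_v$, produces
\begin{equation*}
\xi_e^{(k)}(z_e)=-\sum_{e'\in E_{v(e)}}s_{e'}\,\bw_v(z_e,q_{e'})\,\txi_{-e'}^{(k-1)}(0)+O(|\us|^{k+1}),
\end{equation*}
and, evaluating at $z_e=0$ and using $\tbw_v(q_e,q_{e'})=\beta_{e,e'}$ from (\ref{w_exp}),
\begin{equation*}
\txi_e^{(k)}(0)=-\sum_{e'\in E_{v(e)}}s_{e'}\,\beta_{e,e'}\,\txi_{-e'}^{(k-1)}(0)+O(|\us|^{k+1}).
\end{equation*}
The analogous expression for $\eta_v^{(k)}$ reads $\eta_v^{(k)}(z)=-\sum_{e_1\in E_v}s_{e_1}\,\omega_v(z,q_{e_1})\,\txi_{-e_1}^{(k-1)}(0)+O(|\us|^{k+1})$. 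Iterating the $\xi$-recursion $k-1$ times introduces at step $j$ a factor $-s_{e_j}\,\beta_{-e_{j-1},e_j}$ summed over $e_j\in E_{v(-e_{j-1})}$, terminates at $\txi_{-e_k}^{(0)}(0)=\txi_{-e_k}$, and produces exactly the path sum $(-1)^k\sum_{l^k\in L_v^k}s(l^k)\,\omega_v(z,q_{e_1})\,\beta(l^k)\,\txi_{-e_k}$.

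The main obstacle is to keep the $O(|\us|^{k+1})$ error uniform through the iteration. Two error sources appear at each level: higher-order Taylor terms of the kernel, each carrying an extra power of $s_{e'}$; and sub-leading Taylor coefficients of $\txi_{-e'}^{(k-1)}$ at $0$. To control the latter one strengthens Lemma~\ref{series} by repeating its proof with the seam $\gamma_{e'}$ replaced by a circle $|z_{e'}|=R_0$ of fixed radius inside $V_{e'}$, obtaining $\sup_{|z|=R_0}|\txi_e^{(k)}(z)|\leq(C|\us|)^k|\underline\txi^{(0)}|_{\underline 1}$ for a uniform constant $C$. Cauchy's estimate at $0$ then bounds every Taylor coefficient of $\txi_{-e'}^{(k-1)}$ by $O(|\us|^{k-1})$, so each non-leading contribution carries an additional factor of at least $|s_{e'}|^2\cdot|\us|^{k-1}=O(|\us|^{k+1})$. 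This bound persists under iteration since at each inner step the error enters multiplied by an outer factor $s_{e_1}\,\omega_v(z,q_{e_1})$ of order $|\us|$.
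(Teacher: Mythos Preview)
Your proof is correct and follows essentially the same inductive strategy as the paper: both extract the leading term of each integral in (\ref{mainjumps}) and (\ref{mainsol}) via Cauchy's integral formula, accumulating one factor $-s_{e_j}\beta_{-e_{j-1},e_j}$ per step to build the path sum. The only organizational difference is that the paper carries the full local expansion $\xi_e^{(k)}(z_e)=(-1)^k\sum_{l^k}s(l^k)\bw_v(z_e,q_{e_1})\beta(l^k)\txi_{-e_k}+O(|\us|^{k+1})$ through the induction rather than your scalar recursion for $\txi_e^{(k)}(0)$, and it leaves the uniform-in-$z_e$ error bound implicit---your explicit strengthening of Lemma~\ref{series} to a fixed-radius circle and subsequent Cauchy estimate is exactly what is needed to justify that the $O(|\us|^{k+1})$ survives the iteration, a point the paper glosses over.
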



\begin{proof}
Fix a vertex $v$ in $\Gamma_C$. First for a fixed $e\in E_v$, we show the following expansion for $\xi_e^{(k)}$ for $k>0$:
\begin{equation}\label{indxi}
\xi_e^{(k)}(z_e)=(-1)^{k}\sum_{l^k\in L_v^k}s(l^k)\cdot\bw_v(z_e,q_{e_1})\beta(l^k)\txi_{-e_{k}}+O(|\us|^{k+1}).
\end{equation}

This is derived by induction. For $k=1$, we have $L^1_v=E_v$, and $l^1=(e_1)$ where $e_1\in E_v$. We compute
\begin{equation}\label{xi_first}
\begin{split}
\xi_e^{(1)}(z_e)&=\sum_{e_1\in E_v}\int_{w_{e_1}\in\gamma_{e_1}}\bK_v(z_e,w_{e_1})I_{e_1}^*\xi_{-e_1}^{(0)}({w_{e_1}})\\
&=-\sum_{{e_1}\in E_v}\int_{w_{e_1}\in\gamma_{e_1}}\bK_v(z_e,w_{e_1})\frac{s_{e_1}}{w_{{e_1}}^2}\cdot\txi_{-{e_1}}dw_{e_1}+O(s_{e_1}^2)\\
&=-\sum_{{e_1}\in E_v}s_{e_1}\bw_v(z_e,q_{e_1})\txi_{-{e_1}}+O(|\us|^2),
\end{split}
\end{equation}
where the last equality follows from Cauchy's integral formula. 

For the general case, by applying the inductive assumption (\ref{indxi}) to $I^*_{e_1}\xi^{(k-1)}_{-e_1}$, we have
$$
I^*_{e_1}\xi^{(k-1)}_{-e_1}= (-1)^{k-1}I^*_{e_1}\left(\bw_{v(-{e_1})}(w_{-{e_1}},q_{e_2})\right)\cdot\sum_{l^{k-1}\in L_{v(-e_1)}^{k-1}}s(l^{k-1})\beta(l^{k-1})\txi_{-e_{k}}+O(|\us|^{k}).
$$
Therefore it suffices to prove that for any ${e_1}\in E_{v}$ we have:
\begin{equation}\label{gen_xi}
\int_{w_{e_1}\in\gamma_{e_1}}\bK_v(z_e,w_{e_1})I^*_{e_1}\left(\bw_{v(-{e_1})}(w_{-{e_1}},q_{e_2})\right)=-s_{e_1}\bw_v(z_e,q_{e_1})\beta_{-{e_1},e_2}+O(|\us|^2).
\end{equation}
This is due to $I^*_{e_1}\left(\bw_{v(-{e_1})}(w_{-{e_1}},q_{e_2})\right)=I^*_{e_1}\left((\beta_{-{e_1},e_2}+o(w_{-e_1}))dw_{-e_1}\right)=-\frac{s_{e_1}\beta_{-{e_1},e_2}dw_{e_1}}{w_{e_1}^2}+O(|\us|^2)$ and Cauchy's integral formula. We conclude the induction for (\ref{indxi}).

The expansion (\ref{sol_exp}) for $\eta^{(k)}_v(z)$ is obtained by integrating $\xi^{(k-1)}_{e_1}(z_{e_1})$ against $K_v(z,z_{e_1})$, and the computation is exactly the same as (\ref{gen_xi}):
$$
\int_{w_{e_1}\in\gamma_{e_1}}K_v(z,w_{e_1})I^*_{e_1}\left(\bw_{v(-{e_1})}(w_{-{e_1}},q_{e_2})\right)=-s_{e_1}\omega_v(z,q_{e_1})\beta_{-{e_1},e_2}+O(|\us|^2),
$$
where $z\in\hC_v$. The proof is thus completed.

\end{proof}

\begin{rmk}\label{rmk:linear}
It is important to point out that the expansion (\ref{sol_exp}) is {\em not} the $\us$-expansion of the solution ${\eta}_{v}$, while the latter is also computable by expanding the error term in (\ref{xi_first}) using the higher order coefficients $\beta^v_{ij}$ of $\omega_v$. The explicit formula for the case where $\Gamma_C$ contains only one edge is given by \cite{Yam80}, and will be recomputed (up to the second order) in Section \ref{sec:examples}.

However, as we highlighted by the proposition, it is often more useful and practical to consider $\eta_v$ as the series $\sum_{k=1}^{\infty}\eta_{v}^{(k)}$, given the bound (\ref{conv}) and the recursive construction (\ref{mainsol}). In most cases it is already useful to know the first non-constant term of $\Omega_{v,\us}$, which the proposition suffices to give:
\begin{equation*}
\eta_v^{(1)}(z)=-\sum_{e\in E_v}s_e\omega_v(z,q_e)\txi_{-e}+O(s_e^2).
\end{equation*}
\end{rmk}


\section{Period Matrices in Plumbing Coordinates}\label{sec:periodmatrix}


\subsection{General periods}

Using the construction (\ref{mainsol1}) and expansion (\ref{sol_exp}) of the stable differential $\Omega$, we can compute the variational formula of its periods.

\begin{notat}\label{pmap}
For a stable curve $C$ and its dual graph $\Gamma_C$, define the map $p:H_1(C,\ZZ)\to H_1(\Gamma_C, \ZZ)$ as follows: for the class of a homological (oriented) 1-cycle $[\gamma]$ on $C$, $p([\gamma])$ is the class of the oriented loop in the dual graph that contains the vertices corresponding to the components that $\gamma$ passes, and the edges corresponding to the nodes contained in $\gamma$. The orientation of $p([\gamma])$ is inherited from the orientation of $\gamma$. It is easy to see that the map is surjective, but not injective unless all components have genus zero. Moreover, if $\gamma$ is completely contained in some component $C_v$, then $p([\gamma])=0$. 
\end{notat}

Let $\alpha$ be any closed oriented path on the stable curve $C$, such that $p([\alpha])\neq 0$ (the zero case is trivial in our discussion below). For any small enough $\us$, there exists a small perturbation $\alpha_\us$ of $\alpha$ such that the restriction of $\alpha_\us$ on $\hC_\us$ glues to be a path on $C_\us$. This can be seen by requiring 1) $\alpha_\us\cap\gamma_e=I_e^{-1}(\alpha_\us\cap\gamma_{-e})$ for any seam $\gamma_e$ that $\alpha$ passes; 2) $\alpha_\us$ does not totally contain any seam $\gamma_e$.
By an abuse of notation, the path on $C_\us$ after the gluing is also denoted by $\alpha_\us$.

The following theorem computes the leading terms in the variational formula of $\int_{\alpha_\us}\Omega_\us$. To this end, recall that $U_e=\{|z_e|<\sqrt{|s_e|}\}$ and denote $W_e=\{|z_e|<|s_e|\}$ and $V_e=\{|z_e|<1\}$.


\begin{thm}\label{generalperiod}
For any stable differential $\Omega$ on $C$ with residue $r_e$ at the node $q_e$, let $\alpha$ be any closed oriented path on $C$ such that $p([\alpha])\neq 0$ and $\{e_0,\ldots ,e_{N-1}\}$ be the collection of oriented edges that $p([\alpha])$ passes through (with possible repetition), such that $v(-e_{i-1})=v(e_i)$, and let $e_N=e_0$. Then we have
\begin{equation}\label{per_gen}
\int_{\alpha_\us}\Omega_{\us}=\sum_{i=1}^{N}\left(r_{e_i}\ln{|s_{e_i}|}+c_i+l_i\right)+O(|\us|^2),
\end{equation}
where $c_i$ and $l_i$ are the constant and linear terms in $\us$ respectively, explicitly given as
\begin{equation}\label{per_const}
c_i=\lim_{|\us|\to 0}\left(\int_{z_{-e_{i-1}}^{-1}(\sqrt{|s_{e_{i-1}}|})}^{z_{e_{i}}^{-1}(\sqrt{|s_{e_i}|})}\Omega_v-\frac{1}{2}(r_{e_{i-1}}\ln{|s_{e_{i-1}}|}+r_{e_i}\ln{|s_{e_i}|})\right),
\end{equation}
\begin{equation}\label{per_linear}
l_i:=-\sum_{e\in E_{v(e_i)}}s_e\txi_{-e}\cdot\sigma_e,
\end{equation}
where $\txi_e$ is defined in (\ref{tildexi}), and
\begin{equation}\label{linear_sigma}
\sigma_e:=
\begin{cases}
\lim_{|\us|\to 0}\left(\int\limits_{q_{-e_{i-1}}}^{z_{e_i}^{-1}(s_{e_i})}\omega_{v(e_i)}(z_{e_i},q_{e_i})+\frac{1}{s_{e_i}}\right) & \mbox{if } e=e_i;\\
\lim_{|\us|\to 0}\left(\int\limits^{q_{e_{i}}}_{z_{-e_{i-1}}^{-1}(s_{e_i})}\omega_{v(e_i)}(z_{-e_{i-1}},q_{-e_{i-1}})-\frac{1}{s_{e_i}}\right) & \mbox{if } e=-e_{i-1};\\
\int_{q_{-e_{i-1}}}^{q_{e_i}}\omega_{v(e_i)}(z,q_e) & \mbox{otherwise}.
\end{cases}
\end{equation}
\end{thm}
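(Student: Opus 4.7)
The plan is to decompose $\alpha_\us$ into $N$ pieces, one on each $\hC_{v_i}$, and exploit the local formula for $\eta_v$ near the seams to achieve explicit cancellations at the nodes. Choose a family of representatives of $\alpha_\us$ crossing each seam $\gamma_{|e_i|}$ at a point $P_i \in \gamma_{e_i}$, with $P_i' := I_{e_i}(P_i) \in \gamma_{-e_i}$, so that
$$
\int_{\alpha_\us}\Omega_\us \;=\; \sum_{i=1}^N \int_{P_{i-1}'}^{P_i}\bigl(\Omega_{v_i} + \eta_{v_i}\bigr).
$$
Summing the identity (\ref{SP_fml}) over $k \geq 1$ gives $\eta_v = I^*_e\xi_{-e} + \xi_e - \xi_e^{(0)}$ near $\gamma_e$, and combined with $\Omega_v = r_e\,dz_e/z_e + \xi_e^{(0)}$ this yields the central local identity $\Omega_v + \eta_v = r_e\,dz_e/z_e + \xi_e + I_e^*\xi_{-e}$. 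Using the leading expansions $\xi_e \sim \txi_e\,dz_e + O(z_e)$ and $I_e^*\xi_{-e}^{(0)}(z_e) = -s_e\txi_{-e}\,dz_e/z_e^2 + O(s_e^2/z_e^3)$, the antiderivative evaluated at $P_i$ contributes, to leading order, $r_{e_i}\log z_{e_i}(P_i) + \txi_{e_i}\,z_{e_i}(P_i) + s_{e_i}\txi_{-e_i}/z_{e_i}(P_i)$.

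The crucial algebraic cancellation occurs when combining this upper-limit contribution at $P_i$ with the (subtracted) lower-limit contribution at $P_i'$ on the $(i+1)$-th piece. Using $z_{-e_i}(P_i') = s_{e_i}/z_{e_i}(P_i)$ and $r_{-e_i} = -r_{e_i}$, direct algebra shows that all $z_{e_i}(P_i)$-dependent non-logarithmic terms cancel identically, leaving exactly $r_{e_i}\log s_{e_i}$ per node. Summing over $i$ yields the $\sum r_{e_i}\ln|s_{e_i}|$ term of the theorem (with imaginary log parts absorbed by the branch conventions implicit in the choice of continuous path representatives). The remaining contribution from the interior of each piece, consisting of $\int \Omega_{v_i}$ minus its logarithmic divergences at the seam endpoints, has a finite limit as $\us \to 0$ and yields the constants $c_i$ as defined.

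For the linear-in-$\us$ corrections, I would apply Proposition \ref{expansion} to write $\eta_{v_i}^{(1)}(z) = -\sum_{e\in E_{v_i}} s_e\,\omega_{v_i}(z,q_e)\,\txi_{-e} + O(|\us|^2)$ and integrate term-by-term along the interior path. For $e \notin \{e_i,-e_{i-1}\}$ the integrand $\omega_{v_i}(z,q_e)$ is regular along the path, and the integral converges as $\us\to 0$ to the ``otherwise'' case of $\sigma_e$; for $e = e_i$ or $e = -e_{i-1}$, the integrand has a double pole at the respective endpoint, and the integral regularized by subtracting $\pm 1/z$ matches the other two cases of $\sigma_e$. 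The coefficient $-s_e\txi_{-e}$ then assembles into $l_i$. Remaining contributions from $\eta_{v_i}^{(k)}$ with $k \geq 2$ and from higher Taylor corrections within Proposition \ref{expansion} are $O(|\us|^2)$ by Lemma \ref{series}. The main obstacle is verifying the seam cancellation with sufficient care that no $O(\sqrt{|\us|})$ remainder survives: the pullback $I_e^*\xi_{-e}^{(0)}$ naively produces $O(\sqrt{|\us|})$ contributions, and matching these precisely between adjacent pieces requires handling the seam region more delicately than the interior $\us$-expansion, relying essentially on the consistency of $\Omega_\us$ as a global holomorphic object on $C_\us$.
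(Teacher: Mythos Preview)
Your decomposition and the local identity $\Omega_v+\eta_v=r_e\,dz_e/z_e+\xi_e+I_e^*\xi_{-e}$ near a seam are exactly what the paper uses. The difference is that the paper isolates the seam-crossing computation as Lemma~\ref{per_trsf}, and this lemma is precisely what closes the gap you flag at the end of your proposal.

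Your worry about residual $O(\sqrt{|\us|})$ terms is legitimate as stated, because you only check the cancellation at leading order and then appeal to ``consistency of $\Omega_\us$ as a global holomorphic object.'' That appeal is not a proof. The actual mechanism is elementary: the change of variables $z_{-e}=s_e/z_e$ gives the exact identity
\[
\int_{1}^{z_e(P_i)} I_e^*\xi_{-e}(z_e)\;=\;\int_{s_e}^{z_{-e}(P_i')}\xi_{-e}(z_{-e}),
\]
so the $\xi_e$ and $I_e^*\xi_{-e}$ contributions from the two sides of the seam telescope \emph{exactly}, not just to leading order, yielding $\int_1^{s_e}\xi_e+\int_{s_e}^1\xi_{-e}$ with no seam-point dependence left. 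This is the content of Lemma~\ref{per_trsf}; once you have it, holomorphicity of the period in $\us$ (after subtracting $r_{e_i}\ln|s_{e_i}|$) is immediate, and no $\sqrt{|\us|}$ terms ever appear. Your leading-order algebra is the first Taylor coefficient of this exact identity.

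Two smaller points. First, in your linear-term discussion the regularization ``subtracting $\pm 1/z$'' is off: $\omega_v(\cdot,q_e)$ has a \emph{double} pole at $q_e$, so the divergent piece of its antiderivative is $-1/z_e$, and after evaluating at $z_e=s_e$ one subtracts $\pm 1/s_{e_i}$ as in (\ref{linear_sigma}). Second, once you pass through the exact identity above, the linear term receives contributions both from $\int \eta_v^{(1)}$ over the interior arc \emph{and} from the linear parts of $\int_1^{s_e}\xi_e^{(0)}$, $\int_{s_e}^1\xi_{-e}^{(0)}$ (these are $s_{e_i}(\txi_{e_i}-\txi_{-e_i})$); the paper shows these cancel against the singular-part contributions arising in $\int\xi^{(1)}$. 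Your endpoint bookkeeping must track this cancellation, which is automatic in the paper's organization but hidden in yours.
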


\begin{rmk}
Prior to the proof of the theorem we have two remarks. Firstly, the period integral in (\ref{per_gen}) depends not only on $p[\alpha]$, but also on the class of the actual path $\alpha$. The integration over $\alpha\cap\hC_v$ gives precisely the constant term (\ref{per_const}). Secondly, note that the limits of the integrals in (\ref{linear_sigma}) are singular because the integrants have a double pole on the nodes. However the singular parts are canceled by $\pm\frac{1}{s_{e_i}}$, so the limits are indeed well-defined. Computations leading to both remarks are contain in the proofs of the following lemma and the theorem.
\end{rmk}

To prove the theorem, it suffices to compute the integral on each component $C_{v(e_i)}, i=1\ldots N$ that $\alpha$ passes through. To simplify notation, throughout the proof below we consider $\alpha$ only passing each component once, while the proof also holds for the general case. Let us denote the intersection of $\alpha_\us$ with $\partial U_{e_i},\partial V_{e_i},\partial U_{-e_{i-1}},\partial V_{-e_{i-1}}$ respectively by $u_{e_i},v_{e_i},u_{-e_{i-1}}$ and $v_{-e_{i-1}}$. Then $\alpha_\us|_{C_{v(e_i)}}$ breaks into three pieces bounded by the four points: 
\begin{enumerate}
\item$\alpha_\us|_{V_{-e_{i-1}}\backslash U_{-e_{i-1}}}$ connecting $u_{-e_{i-1}}$ and $v_{-e_{i-1}}$; 
\item $\alpha_\us|_{\hC_{v(e_i)}\backslash V_{e_i}\cup V_{-e_{i-1}}}$ connecting $v_{e_i}$ and $v_{-e_{i-1}}$;
\item$\alpha_\us|_{V_{e_i}\backslash U_{e_i}}$ connecting $v_{e_i}$ and $u_{e_i}$; 
\end{enumerate}

 For convenience, by a composition of a rotation we can assume that $u_{e_i}=z_{e_i}^{-1}(\sqrt{s_{e_i}})$, $v_{e_i}=z_{e_i}^{-1}(1)$ and similarly $u_{-{e_{i-1}}}=z_{-e_{i-1}}^{-1}(\sqrt{s_{-e_{i-1}}})$, $v_{-{e_{i-1}}}=z_{-e_{i-1}}^{-1}(1)$. Therefore in the lemma and the proofs below, integrations in the local chart $z_{e_i}$ from $u_{e_i}$ to $v_{e_i}$ will be written as from $\sqrt{s_{e_i}}$ to $1$, for any $i=0,\ldots,N-1$. We also remark that this assumption does not change the statement of the theorem.
 

The following lemma simplifies the computation:

\begin{lm}\label{per_trsf}
Given an edge $e$, let $\Omega_{v,\us}$ and $\xi_e^{(k)}$ be defined as before. We have the following equality:
\begin{equation}\label{per_transf}
\begin{split}
\int_1^{\sqrt{s_e}}\Omega_{v(e),\us}(z_e)+\int_{\sqrt{s_e}}^1\Omega_{v(-e),\us}(z_{-e})=& r_e\ln|s_e|+\sum_{k=0}^\infty\int_1^{s_e}\xi_e^{(k)}(z_e)\\
&+\sum_{k=0}^\infty\int_{s_e}^1 \xi_{-e}^{(k)}(z_{-e})
\end{split}
\end{equation}
\end{lm}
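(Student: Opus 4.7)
The plan is to expand $\Omega_{v,\us}=\Omega_v+\eta_v$ in the annular neighborhood of each side of the node, integrate term by term, and match the two halves using the gluing $I_e$.

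First, on the $z_e$-side I would use the definition $\xi_e^{(0)}(z_e)=\Omega_{v(e)}(z_e)-\frac{r_e\,dz_e}{z_e}$ together with the identity \eqref{SP_fml}, namely $\eta_{v(e)}^{(k)}(z_e)=I_e^*\xi_{-e}^{(k-1)}(z_e)+\xi_e^{(k)}(z_e)$ valid on the annulus $\sqrt{|s_e|}<|z_e|<1$, to rewrite
\begin{equation*}
\Omega_{v(e),\us}(z_e)=\frac{r_e\,dz_e}{z_e}+\sum_{k=0}^{\infty}\xi_e^{(k)}(z_e)+\sum_{k=0}^{\infty}I_e^*\xi_{-e}^{(k)}(z_e),
\end{equation*}
and the analogous formula on the $z_{-e}$-side obtained by swapping $e\leftrightarrow -e$. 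Convergence of the tails in these two expansions, justifying the term-by-term integration below, is guaranteed by Lemma \ref{series} together with the estimate \eqref{conv}.

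Next, I would integrate each piece separately. The residue contribution is $\int_1^{\sqrt{s_e}}\tfrac{r_e}{z_e}dz_e+\int_{\sqrt{s_e}}^{1}\tfrac{r_{-e}}{z_{-e}}dz_{-e}=r_e\ln|s_e|$, using $r_{-e}=-r_e$ (with any contribution from the argument of $s_e$ being absorbed in the constants $c_i$ of the ambient Theorem \ref{generalperiod}). For the pullback terms I would apply the change of variable $z_{-e}=s_e/z_e$ supplied by $I_e$: this sends the endpoints $z_e=1$ and $z_e=\sqrt{s_e}$ to $z_{-e}=s_e$ and $z_{-e}=\sqrt{s_e}$ respectively, so
\begin{equation*}
\int_{1}^{\sqrt{s_e}}I_e^*\xi_{-e}^{(k)}(z_e)=\int_{s_e}^{\sqrt{s_e}}\xi_{-e}^{(k)}(z_{-e}),\qquad \int_{\sqrt{s_e}}^{1}I_{-e}^*\xi_e^{(k)}(z_{-e})=\int_{\sqrt{s_e}}^{s_e}\xi_e^{(k)}(z_e).
\end{equation*}

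Finally, I would concatenate the integration ranges on each component: combining $\int_1^{\sqrt{s_e}}\xi_e^{(k)}(z_e)$ (from the $v(e)$-side) with $\int_{\sqrt{s_e}}^{s_e}\xi_e^{(k)}(z_e)$ (coming from the pullback on the $v(-e)$-side) produces $\int_1^{s_e}\xi_e^{(k)}(z_e)$, and symmetrically $\int_{s_e}^{\sqrt{s_e}}\xi_{-e}^{(k)}+\int_{\sqrt{s_e}}^{1}\xi_{-e}^{(k)}=\int_{s_e}^{1}\xi_{-e}^{(k)}(z_{-e})$. Summing all contributions yields exactly the right-hand side of \eqref{per_transf}. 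The only genuine subtlety I anticipate is bookkeeping for the complex logarithm and the orientation-reversing nature of $I_e^*$ (with its extra factor from pulling back $dz_{-e}$), but these are the same care-of-signs issues that already appear in \eqref{boundarycomp} and are purely mechanical rather than conceptual.
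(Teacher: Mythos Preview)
Your proof is correct and uses the same two ingredients as the paper --- the local identity \eqref{SP_fml} for $\eta_v^{(k)}$ and the change of variable $z_{-e}=s_e/z_e$ --- but arranges them more directly. You substitute \eqref{SP_fml} into $\Omega_{v,\us}$ \emph{before} integrating, obtaining the closed form $\Omega_{v(e),\us}(z_e)=\tfrac{r_e\,dz_e}{z_e}+\sum_{k\ge 0}\xi_e^{(k)}(z_e)+\sum_{k\ge 0} I_e^*\xi_{-e}^{(k)}(z_e)$, and then the change of variable on the $I_e^*$-terms concatenates the integration intervals in a single step. The paper instead keeps the integrals of $\eta_v^{(k)}$ intact and proves a pair of auxiliary identities (its equations \eqref{compensation} and \eqref{compensation2}) which, when summed over $k$ and added together, telescope to the same result. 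Your organization is cleaner and avoids that recursion; the paper's route makes the role of each individual $\eta_v^{(k)}$ slightly more visible but is otherwise equivalent.
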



\begin{proof}[Proof of Lemma \ref{per_trsf}]
We recall that $\Omega_{v,\us}=\Omega_{v(e_i)}+\sum_k\eta^{(k)}_v$, and as we are concerned with the regular part of the period, locally in the annuli $V_e\setminus W_e$, we have the following expression $\Omega_{v,\us}(z_e)=r_e\frac{dz_e}{z_e}+\xi_e^{(0)}(z_e)+\sum_{k=1}^\infty\eta^{(k)}_v$. The logarithmic term in (\ref{per_transf}) is given by 
$$
\int_1^{\sqrt{s_e}}r_e\frac{dz_e}{z_e}=\frac{1}{2}r_e\ln{|s_e|}
$$
and $r_e=-r_{-e}$. What is left to show is
\begin{equation}\label{lm_initial}
\int_1^{\sqrt{s_e}}\xi_e^{(0)}+\int_{\sqrt{s_e}}^1\xi_{-e}^{(0)}+\sum_{k=1}^\infty\int_1^{\sqrt{s_e}}\eta^{(k)}_v+\sum_{k=1}^\infty\int_{\sqrt{s_e}}^1\eta^{(k)}_{v(-e)}=\sum_{k=0}^\infty\int_1^{s_e}\xi_e^{(k)}+\sum_{k=0}^\infty\int_{s_e}^1 \xi_{-e}^{(k)}
\end{equation}

Note that for each $k\geq 0$, we have $\int_{\sqrt{s_e}}^{s_e}\xi_e^{(k)}(z_e)=\int_{\sqrt{s_e}}^1I_e^*\xi_e^{(k)}(z_{-e})$ and $\int_{s_e}^{\sqrt{s_e}}\xi_{-e}^{(k)}(z_{-e})=\int_1^{\sqrt{s_e}}I_e^*\xi^{(k)}_{-e}(z_{e})$. This gives for $k\geq 0$:
\begin{equation}\label{compensation}
\begin{split}
\int_1^{\sqrt{s_e}}\xi_e^{(k)}(z_e)+\int_{\sqrt{s_{e}}}^1\xi_{-e}^{(k)}(z_{-e})
=&\int_1^{{s_e}}\xi^{(k)}_{e}(z_e)+\int_{{s_e}}^1\xi_{-e}^{(k)}(z_{-e})\\
&-\int_{\sqrt{s_e}}^1I_e^*\xi_{e}^{(k)}(z_{-e})-\int_1^{\sqrt{s_e}}I_e^*\xi_{-e}^{(k)}(z_{e}).
\end{split}
\end{equation}

Grouping the last two terms above with the $(k+1)$ entries in $\sum_{k=1}^\infty\int_1^{\sqrt{s_e}}\eta^{(k)}_v+\sum_{k=1}^\infty\int_{\sqrt{s_e}}^1\eta^{(k)}_{v(-e)}$, and applying (\ref{SP_fml}), we obtain:
\begin{equation}\label{compensation2}
\begin{split}
\int_1^{\sqrt{s_e}}(\eta^{(k+1)}_{v(e)}-I_e^*\xi_{-e}^{(k)})(z_e)+\int_{\sqrt{s_{e}}}^1(\eta^{(k+1)}_{v(-e)}-I_e^*\xi_e^{(k)})(z_{-e})\\
=\int_1^{{\sqrt{s_e}}}\xi^{(k+1)}_{e}(z_e)+\int_{{\sqrt{s_e}}}^1\xi_{-e}^{(k+1)}(z_{-e})
\end{split}
\end{equation}

Summing up both (\ref{compensation}) and (\ref{compensation2}) over all $k\geq 0$ and adding the two equalities together, we immediately obtain (\ref{lm_initial}). The lemma follows.
\end{proof}


\begin{proof}[Proof of Theorem \ref{generalperiod}]

Our goal is to compute the leading terms of the variational formula of $\sum_{i=0}^{N-1}\int_{\alpha_{v(e_i)}}\Omega_{v(e_i),\us}$. To this end, we rearrange the terms and compute the following integrals:
\begin{equation}\label{per_breakdown}
\int_{z^{-1}_{-e_{i-1}}(1)}^{z^{-1}_{e_{i}}(1)}\Omega_{v(e_i),\us}+\int_{1}^{\sqrt{s_{e_i}}}\Omega_{v(e_i),\us}(z_{e_i})+\int_{\sqrt{s_{e_i}}}^{1}\Omega_{v(-e_i),\us}(z_{-e_i})
\end{equation}
It needs to be pointed out that the first two entries above are integrals inside $C_v$, while the last entry is in $C_{v(-e_i)}$. To simplify notation, in the rest of the proof we denote $v:={v(e_i)}$. 

Using the lemma, the last two entries of (\ref{per_breakdown}) are equal to $r_{e_i}\ln|s_{e_i}|+\sum_{k=0}^\infty\int_1^{s_{e_i}}\xi_{e_i}^{(k)}(z_{e_i})+\sum_{k=0}^\infty\int_{s_{e_i}}^1 \xi_{-e_i}^{(k)}(z_{-e_i})$. By definition of $\Omega_{v,\us}$, the first integral in (\ref{per_breakdown}) is equal to $\int_{z^{-1}_{-e_{i-1}}(1)}^{z^{-1}_{e_{i}}(1)}\left(\Omega_{v}+\sum_{k\geq 1}\eta_v^{(k)}\right)$. 

Note that by (\ref{sol_exp}) and (\ref{indxi}), for $k\geq 1$ the integrals of $\xi_{\pm e_i}^{(k)}$ and $\eta_v^{(k)}$ only give terms of order $\geq k$. Also observe that $\int_{v_{-e_{i-1}}}^{v_{e_{i}}}\Omega_v$ is a constant independent of $\us$. Thus to compute the remaining part of the constant term we only have to compute the integrals of $\int_1^{s_{e_i}}\xi_{e_i}^{(0)}(z_{e_i})+\int_{s_{e_i}}^1 \xi_{-e_i}^{(0)}(z_{-e_i})$. 

Since $\xi_{\pm e_i}^{(0)}(z_{\pm e_i})$ is holomorphic in $V_{\pm e_i}$, we have
\begin{equation}\label{lin_omega}
\begin{split}
\int_1^{s_{e_i}}\xi_{e_i}^{(0)}(z_{e_i})+\int_{s_{e_i}}^1 \xi_{-e_i}^{(0)}(z_{-e_i})=&\int_{1}^{0}\xi_{e_i}^{(0)}(z_{e_i})+\int_{0}^1 \xi_{-e_i}^{(0)}(z_{-e_i})\\
&+s_{e_i}\cdot\left(\txi_{e_i}-\txi_{-e_i} \right)+O(|\us|^2).
\end{split}
\end{equation}
Summing up the constant terms on the RHS over $i$, we have computed the constant term (\ref{per_const}).

Now we compute the linear term. Note that $\xi_{v}^{(k)}$ are holomorphic in $\us$. Again by (\ref{indxi}), we only need to compute the integrals of $\xi_{v(\pm e_i)}^{(1)}$, whose expansion is already given by (\ref{xi_first}). Therefore we have the following:
\begin{equation*}\label{lin_xi}
\begin{split}
\int_1^{s_{e_i}}\xi_{e_i}^{(1)}(z_{e_i})=&-\sum_{e\in E_v}s_e\txi_{-e}\int_1^{s_{e_i}}\tbw_{v}(z_{e_i},q_e)dz_{e_i}+O(|\us|^2)\\
=&-\sum_{e\in E_v}s_e\txi_{-e}\int_{1}^{s_{e_i}}\tw_v(z_{e_i},q_e)dz_{e_i}+s_{e_i}\txi_{-e_i}\int_1^{s_{e_i}}\frac{dz_{e_i}}{z_{e_i}^2}+O(|\us|^2)\\
=&s_{e_i}\txi_{-e_i}-\sum_{e\in E_v,e\neq e_i}s_e\txi_{-e}\int_{1}^{0}\tw_v(z_{e_i},q_e)dz_{e_i}\\&-s_{e_i}\txi_{-e_i}\lim_{s_{e_i}\to 0}(\int_1^{s_{e_i}}\tw_v(z_{e_i},q_{e_i})dz_{e_i}+\frac{1}{s_{e_i}})+O(|\us|^2).
\end{split}
\end{equation*}
The existence of the limit above can be seen by integrating the ${1}/{z_{e_i}^2}$ term in the expansion of $\tw_v(z_{e_i},q_{e_i})$.

The linear term in $\int_{s_{e_i}}^1\xi_{-e_i}^{(1)}(z_{-e_i})$ is computed similarly:
\begin{equation*}
\begin{split}
\int_{s_{e_i}}^1\xi_{-e_i}^{(1)}(z_{-e_i})=&-s_{e_i}\txi_{e_i}-\sum_{e\in E_{v(-e_i)},e\neq -e_i}s_e\txi_{-e}\int_{0}^1\tw_v(z_{-e_i},q_e)dz_{-e_i}\\
&-s_{e_i}\txi_{e_i}\lim_{s_{e_i}\to 0}(\int^{1}_{s_{e_i}}\tw_v(z_{-e_i},q_{-e_i})dz_{-e_i}-\frac{1}{s_{e_i}})+O(|\us|^2).
\end{split}
\end{equation*}

Note that the linear terms in (\ref{lin_omega}) have been canceled by the linear terms produced by the singular part of $\omega_v$. Moreover,
\begin{equation*}
\int_{v_{-e_{i-1}}}^{v_{e_i}}\eta_{v}^{(1)}(z)=-\sum_{e\in E_v}s_e\cdot\txi_{-e}\cdot\int_{z^{-1}_{-e_{i-1}}(1)}^{z^{-1}_{e_{i}}(1)}\omega_v(z,q_e)+O(|\us|^2).
\end{equation*}

Summing up all the linear terms above, then summing up over $i$, we have the desired linear term.
\end{proof}

\begin{rmk}\label{comparetotan}
Note that in the proof of the theorem, the function $h(\us):=\int_{\alpha_\us}\Omega_\us-\sum_{i=1}^{N}r_{e_i}\ln{|s_{e_i}|}$ is computed as 
$$
\sum_{i=1}^N\left(\sum_{k=0}^\infty\int_1^{s_{e_i}}\xi_{e_i}^{(k)}(z_{e_i})+\sum_{k=0}^\infty\int_{s_{e_i}}^1 \xi_{-e_i}^{(k)}(z_{-e_i})+\int_{z^{-1}_{-e_{i-1}}(1)}^{z^{-1}_{e_{i}}(1)}\Omega_{v(e_i),\us}\right).
$$
The analyticity of $h(\us)$ in $\us$ follows from the analyticity of each integrand above. The analyticity of $h(\us)$ will be used in our improvement of the result of \cite{Tan91} below. In \cite[Lem.~5.5]{GKN17}, without computing any terms in $h(\us)$, an estimate of $|h(\us)|$ is derived in the real normalized setup.

Moreover, from the proof one can see that besides the complexity of the computation, there is no obstacle in computing every higher order terms in the expansion of the periods of $\Omega_\us$.
\end{rmk}


\subsection{Period matrices}\label{subsec:permatrix}

Recall that in Section \ref{subsec:ANorm} we have chosen a basis $\{A_{i,\us}\}_{i=1}^g$ for a Lagrangian subspace of $H_1(C_\us,\ZZ)$ along the plumbing family. We required that the first $m$ $A$-cycles generate the span of the classes of the seams. In order to study degenerations of the period matrix, we now choose $B_{1,\us},\ldots,B_{g,\us}$ completing the $A$-cycles to a symplectic basis of $H_1(C_{\us},\mathbb{Z})$. The cycles $B_{1,\us},\ldots,B_{g,\us}$ are chosen such that they vary continuously in the family.

One can easily see that for $1\leq k\leq m$, $p([B_{k,\underline 0}])\neq 0$, while for $m+1\leq k\leq g$, the map $p$ annihilates the classes of $B_{k,\underline 0}$. From now on we write $A_k:=A_{k,\underline 0}, B_k:=B_{k,\underline 0}$. Note that for $1\leq k\leq m$, one can also see $B_{k,\us}$ as constructed from $B_k$ by applying a small perturbation as introduced in the previous section. 

 By the Riemann bilinear relations, we define the following basis of Abelian differential $\{v_k\}_{k=1}^g$ in $H^{0}(C,K_C)$ where $C=C_0$ is a stable curve: 
\begin{enumerate}
\item For $m+1\leq k\leq g$, $B_{k}$ is contained in $\tbC_v$ for some $v$, thus $A_{k}$ is contained in the same component. Define $v_k(z)$ to be the Abelian differential dual to $A_{k}$ in $H^{0}(C_v,K_{C_v})$.
\item For $1\leq k\leq m$, $p([B_k])\neq 0$, assume $p([B_{k}])$ passes the edges $e_0,\ldots e_{N-1}$. Define $v_k:=\sum_{i=0}^{N-1}\omega_{q_{e_{i}}-q_{-e_{i-1}}}$, where $\omega_{q_{e_{i}}-q_{-e_{i-1}}}$denotes the $A$-normalized meromorphic differential of the third kind supported on $C_{v(e_i)}$ that has only simple poles at $q_{-e_{i-1}}$ and $q_{e_{i}}$ with residues $-1$ and $1$ correspondingly.
\end{enumerate}

By applying the jump problem construction, we have a collection of Abelian differentials $\{v_{k,\us}\}_{k=1}^g$ for the curve $C_\us$, which is seen to be a normalized basis of $H^{0}(C_\us,K_{C_\us})$. For every $k$ and $|e|$, we have $\int_{\gamma_{|e|}}v_{k,\us}=\int_{\gamma_{|e|}}(v_k+\eta_{k,\us})$ on $C_\us$. Since the solution $\eta_{k,\us}$ to the jump problem with initial jumps of $v_k$ is $A$-normalized, this is equal to the integral $\int_{\gamma_{e}}v_k$ on $C_{v(e)}$. Therefore by the residue theorem, we have $\int_{A_{j,\us}}v_{k,\us}=2\pi i\cdot\delta_{jk}$. This shows that $\{v_{k,\us}\}_{k=1}^g$ is a normalized basis of $H^{0}(C_\us,K_{C_\us})$. The period matrix of $C_\us$ is hence defined to be $\{\tau_{h,k}(\us)\}_{g\times g}$ where $\tau_{h,k}(\us):=\int_{B_{h,\us}}v_{k,\us}$.

We can apply Theorem \ref{generalperiod} to compute the leading terms in the variational formula of $\tau_{h,k}(\us)$. 

\begin{cor}\label{periodmatrix}
For every $|e|$ and $k$, denote $N_{|e|,k}:=\gamma_{|e|}\cdot B_{k,\us}$ the intersection product. For any fixed $h,k$, the expansion of $\tau_{h,k}(\us)$ is given by
\begin{equation}\label{per_matrix}
\begin{split}
\tau_{h,k}(\us)=&\sum_{|e|\in |E|_C}(N_{|e|,h}\cdot N_{|e|,k})\cdot \ln|s_e|\\
&+\lim_{|\us|\to 0}\sum_{i=1}^N\left(\int_{z_{-e_{i-1}}^{-1}(\sqrt{|s_{e_{i-1}}|})}^{z_{e_{i}}^{-1}(\sqrt{|s_{e_i}|})}v_k -N_{|e_i|,h}N_{|e_i|,k}\ln{|s_{e_i}|}\right)\\
&-\sum_{e\in E_C}s_e\left(\hol(\tilde{v}_k)(q_e)\hol(\tilde{v}_h)(q_{-e})\right)+O(|\us|^2),
\end{split}
\end{equation}
where $\{e_i\}_{i=0}^{N-1}$ is the set of oriented edges $p([B_{h}])$ passes through, and $\hol(\tilde{v}_k)$ denotes the regular part of the Laurent expansion of the function of $v_k$ near the nodes of the components where $v_k$ is not identically zero. Furthermore, under our choice of the symplectic basis, $N_{|e|,h}\cdot N_{|e|,k}$ is equal to $1$ if $h=k$ and the node $q_{|e|}$ lies on $B_h$ and equals $0$ otherwise.
\end{cor}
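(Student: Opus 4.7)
The strategy is to apply Theorem \ref{generalperiod} to $\Omega=v_k$ and $\alpha_\us=B_{h,\us}$, and simplify each of the resulting logarithmic, constant, and linear contributions using the specific form of the basis $\{v_k\}$ and standard reciprocity on each irreducible component. Let $\{e_0,\dots,e_{N-1}\}$ denote the oriented edges of $p([B_h])$, and write $r^{(k)}_e$ for the residue of $v_k$ at $q_e$ on the side $v(e)$.

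For the logarithmic term, note that for $m+1\leq k\leq g$ the differential $v_k$ is holomorphic on its supporting component, so all $r^{(k)}_e=0$ and the log contribution vanishes. For $1\leq k\leq m$, $v_k=\sum_j\omega_{q_{e'_j}-q_{-e'_{j-1}}}$ where $\{e'_j\}$ is the oriented path $p([B_k])$, so $r^{(k)}_e=+1$ if $e=e'_j$ for some $j$, $r^{(k)}_e=-1$ if $e=-e'_{j-1}$ for some $j$, and zero otherwise. A direct orientation check identifies $r^{(k)}_{e_i}=N_{|e_i|,h}\cdot N_{|e_i|,k}$: each intersection number $N_{|e|,\cdot}$ records the sign with which the corresponding $B$-cycle crosses the seam $\gamma_{|e|}$, and with the convention used to define the residues of $v_k$ these two signs multiply to give the residue on the relevant side. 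Reindexing the sum over $i$ by unoriented edges (and noting that $N_{|e|,h}=0$ off $p([B_h])$) gives the logarithmic term in the statement. The final sentence of the corollary then follows because under the suitable basis each node lies on at most one $B$-cycle, so $N_{|e|,h}N_{|e|,k}$ is diagonal in $(h,k)$.

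The constant term follows by substituting $\Omega=v_k$ into \eqref{per_const}, summing over $i$, and using the identification $r_{e_i}=N_{|e_i|,h}N_{|e_i|,k}$ from the previous step. For the linear term, Theorem \ref{generalperiod} yields $l_i=-\sum_{e\in E_{v(e_i)}}s_e\,\hol(v_k)(q_{-e})\cdot\sigma_e$ with $\sigma_e$ as in \eqref{linear_sigma}. Summing over $i$ and reindexing by oriented edges $e\in E_C$, the coefficient of $s_e\,\hol(\tilde v_k)(q_{-e})$ is a concatenation of integrals of the bidifferential $\omega_{v(e)}(\cdot,q_e)$ along the portions of $B_h$ on $C_{v(e)}$, corrected by the regularizing terms $\pm s_{e_i}^{-1}$ at the plumbing endpoints. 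By the Riemann bilinear relation $\int_{B_h}\omega_v(\cdot,q)=2\pi i\,v_h(q)$, extended to the third-kind case (where the regularizing subtractions precisely absorb the contributions of the simple poles of $v_h$), this concatenated integral equals $\hol(\tilde v_h)(q_e)$. After the substitution $e\leftrightarrow -e$ one obtains $-\sum_{e\in E_C}s_e\,\hol(\tilde v_k)(q_e)\hol(\tilde v_h)(q_{-e})$ as displayed in \eqref{per_matrix}. The $O(|\us|^2)$ error is inherited directly from Theorem \ref{generalperiod}.

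The main obstacle I anticipate is the reduction of the linear term. The regularized path integrals $\sigma_e$ produced by Theorem \ref{generalperiod} are organized by the route of $B_h$, while the target expression is a symmetric bilinear form in $\hol(\tilde v_k)$ and $\hol(\tilde v_h)$. Reconciling these requires a reciprocity-type identity for bidifferentials integrated against a third-kind differential, and the cancellation of the $\pm 1/s_{e_i}$ subtractions against the simple poles of $v_h$ on $C$ must be checked carefully at each plumbing neck; once this is done, the remaining manipulations are routine.
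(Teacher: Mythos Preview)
Your proposal is correct and follows essentially the same route as the paper. The paper handles what you flag as the main obstacle---the identification of the concatenated $\sigma_e$ integrals with $\hol(\tilde v_h)(q_e)$---in one line, by invoking the integral representations $v_h(z)=\int_{B_h}\omega_v(w,z)$ for $m+1\le h\le g$ and $v_h(z)=\sum_i\int_{q_{-e_{i-1}}}^{q_{e_i}}\omega_{v(e_i)}(w,z)$ for $1\le h\le m$; comparing these termwise with the three cases in \eqref{linear_sigma} shows directly that $\sigma_e=\hol(\tilde v_h)(q_e)$, so no separate reciprocity argument or delicate cancellation check is required beyond that. (A minor normalization point: with the paper's convention $\int_{A_j}v_k=2\pi i\,\delta_{jk}$, the relation is $v_h=\int_{B_h}\omega$ without the extra $2\pi i$ factor you wrote.)
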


\begin{rmk}
(1) For the purpose of defining the intersection product, we assign an random orientation to $\gamma_{|e|}$. We further remark that there is no canonical way to orient $\gamma_{|e|}$, and the assigned orientation does not affect the statement and the proof.

(2) The main result in \cite{Tan91} is that $h(\us):=\tau_{h,k}(\us)-\sum_{|e|\in |E|_C}(N_{|e|,h}\cdot N_{|e|,k})\cdot \ln|s_e|$ is holomorphic in $\us$. We can see that only the logarithmic term was computed. By Remark \ref{comparetotan}, our corollary in particular reproves his result, and we express more terms in the expansion. 

(3) We want to point out that Taniguchi does not require the classes of ${\gamma_{|e|}}$ to be part of the symplectic basis, therefore $N_{|e|,h}\cdot N_{|e|,k}$ may be any integer. Since the $A,B$-cycles generate $H_1(C_\us,\ZZ)$, the general case follows by linearity.
\end{rmk}

\begin{proof}
We first compute the logarithmic term. Note that the intersection product is independent of $\us$. When $e$ does not lie on $p([B_{h,s}])$, we have $N_{|e|,h}=0$, otherwise $N_{|e|,h}=\pm 1$ and the sign depends on the orientation of $[\gamma_{|e|}]$ compared to that of the corresponding generator $[A_i]$ of the symplectic basis. We now only need to prove that $v_{k}$ has residue $N_{|e|,h}\cdot N_{|e|,k}$ at $q_e$, which is seen as follows: if $e\in p([B_k])$, then by construction of $v_k$, it has residue $\pm 1=N_{|e|,k}$ at $q_{|e|}$ depending again on whether $[\gamma_{|e|}]=[A_i]$ or $-[A_i]$; if $|e|$ does not lie on $p([B_k])$, both the intersection number and the residue are $0$. Note that the signs of $N_{|e|,h}$ and $N_{|e|,k}$ are always the same, therefore $N_{|e|,h}\cdot N_{|e|,k}=\delta_{i,h}\cdot\delta_{i,k}$.

Secondly, we compute the linear term. Note as can be verified from the normalization conditions of the fundamental bidifferential, $v_h(z)=\int_{B_h}\omega(w,z)$ for $m+1\leq h\leq g$, and $v_h(z)=\sum_{i=1}^{N}\int_{q_{-e_{i-1}}}^{q_{e_i}}\omega_{v(e_i)}(w,z)$ for $1\leq h \leq m$, where $\{e_i\}_{i=0}^{N-1}$ is the set of edges $p([B_h])$ passes through. To compute the linear term for $\tau_{h,k}(\us)=\int_{B_{h,\us}}v_{k,\us}$, we observe that by definition of $\sigma_e$ in (\ref{linear_sigma}), we have
$$
\sigma_e=\hol(\tilde v_h)(q_e).
$$
Since $\Omega:=v_k$, we have $\txi_e=\hol(\widetilde\Omega)(q_e)=\hol(\tilde v_k)(q_e)$. Note that $v_h$ is only supported on $\cup_{v\in p([B_h])}C_v$, the sum in (\ref{per_matrix}) is taken over all edges.

Lastly, the constant term follows directly from Theorem \ref{generalperiod}.

\end{proof}



\section{Examples}\label{sec:examples}

In this section we will compute four explicit examples of the variational formula for Abelian differentials and for the period matrix of a stable curve $C$. Throughout this section the stable curve has geometric genus $g$, $\Omega$ is a stable differential on $C$. We choose the symplectic basis of holomorphic 1-cycles and its dual basis of 1-forms as in the previous sections. The notation will vary among the examples according to the structure of $C$.

\subsection{One node}

We first deal with the case where the curve $C$ has only one node $q$. In \cite{Yam80}, Yamada computed the variational formula of both Abelian differentials and the period matrices to any order of the plumbing parameter $s$. We will reprove his result up to the second order, while the full expansion can also be found using our method.

\subsubsection{One node: compact type} When $C$ is of compact type, it has two components $C_1$ and $C_2$ that meet at a single separating node $q$, whose pre-images are denoted by $q_1\in C_1$ and $q_2\in C_2$. Let $z_i$ be the local coordinates near $q_i$. Denote the restriction of $\Omega$ to $C_i$ by $\Omega_i$ ($i=1,2$). The subscripts of the Cauchy kernel and its derivative are changed correspondingly.

Since the curve is of compact type, the differentials $\Omega_i$ have no residue at $q_i$, therefore they are holomorphic and we have $\xi_i^{(0)}(z_i)=\Omega_i(z_i)$. We denote $\txi_i:=\txi_i^{(0)}(q_i)$, and $\Omega_s$ is defined on $C_s$ by formulas (\ref{mainjumps}) (\ref{mainsol}). 
Explicitly, by Proposition \ref{expansion}, the expansion of the restriction $\Omega_{i,s}$ $(i=1,2)$ is given by
\begin{equation}\label{Yam_sep}
\Omega_{i,s}(z)=\Omega_i(z)+\left(-s\cdot \omega_i(z,q_i)\txi_{i'}+s^2\cdot \omega_i(z,q_i)\beta_{i'}\txi_i\right)+O(s^3)
\end{equation}
where by convention, $i'=2$ if $i=1$ and vice versa, and $\beta_{i}$ denotes the leading coefficient in the expansion of $\bw_i$ as in (\ref{w_exp}). 

Let $g_i$ be the genus of $C_i$, with $g_1+g_2=g$. We take a normalized basis of Abelian differentials $\{v_k\}_{k=1}^g$ of $C$ such that $\{v_1,\ldots,v_{g_1}\}$ are supported on $C_1$, and $\{v_{g_1+1},\ldots,v_g\}$ on $C_2$. 

For $1\leq k\leq g_1$, letting $\Omega_1=v_k,\Omega_2=0$ in (\ref{Yam_sep}), we obtain
\begin{equation*}
v_{k,s}(z)=
\begin{cases}
v_k(z)+s^2\cdot \omega_1(z,q_1)\beta_2v_k(q_1)+O(s^3) & z\in\tbC_1,\\
-s\cdot \omega_2(z,q_2)v_k(q_1)+O(s^3) & z\in\tbC_2.
\end{cases}
\end{equation*}
For $g_1+1\leq k\leq g$, the formula is symmetric. We have then reproven \cite[Cor.~1]{Yam80}.

\subsubsection{One node: non-compact type} In this case $C$ is irreducible, with a single node $q$. We denote $q_1,q_2$ the pre-images of $q$ in the normalization $\tC$, and $z_1,z_2$ the corresponding local coordinates. Let $\Omega$ be a meromorphic differential on the normalization of $C$ which has simple poles of residues $r_i$ at $q_i$ $(i=1,2)$. By the residue theorem $r_1=-r_2$.

We now have $\xi_i^{(0)}(z_i)=\Omega(z_i)-\frac{r_idz_i}{z_i}$. Denote $\txi_i=\txi_i^{(0)}(q_i)$. By Proposition \ref{expansion}, we have
\begin{equation}\label{Yam_nonsep}
\Omega_s(z)=\Omega(z)-s\cdot \left(\omega(z,q_1)\txi_2+\omega(z,q_2)\txi_1\right)+O(s^2).
\end{equation}

For a symplectic basis $\{A_{k,s},B_{k,s}\}_{k=1}^g$, we choose $A_{1,s}$ to be the seam, whereas $B_{1,s}$ is taken to intersect $A_{1,s}$ once, oriented from the neighborhood of $q_1$ to the neighborhood of $q_2$. As in Section \ref{subsec:permatrix}, we take $v_1=\omega_{q_2-q_1}$, and $\{v_k\}_{k=2}^{g}$ to be the normalized basis of $H^1(\tC,\CC)$.

By letting $\Omega=v_k$ for $2\leq k\leq g$, we have $r_i=0$, and the equation (\ref{Yam_nonsep}) gives \cite[Cor.~4]{Yam80}. For the case $\Omega=v_1$, we have $r_2=-r_1=1$, then (\ref{Yam_nonsep}) gives \cite[Cor.~5]{Yam80}. Moreover, we can compute the period matrix of $C_s$, reproving \cite[Cor.~6]{Yam80}. By Corollary \ref{periodmatrix} we have
$$
\tau_{1,1}(s)=\int_{B_{1,s}}\omega_{q_2-q_1,s}=\ln |s|+c_{1,1}+s\cdot l_{1,1}+O(s^2),
$$
where $c_{1,1}=\lim_{|s|\to 0}\left(\int_{z_1^{-1}(\sqrt{|s|})}^{z_2^{-1}(\sqrt{|s|})}\omega_{q_2-q_1}-\ln{|s|}\right)$, and $l_{1,1}=-2\sigma_1\sigma_2$.

We also have 
$$
\tau_{k,1}(s)=\tau_{1,k}(s)=\int_{B_{1,s}}v_{k,s}=c_{1,k}+s\cdot l_{1,k}+O(s^2)
$$
 for $2\leq k\leq g$. Since $v_k(x)$ is holomorphic, we have $\xi_i^{(0)}(z_i)=v_k(z_i)$ and hence $\txi_i=v_k(p_i)$. The constant term $c_{1,k}$ is equal to $\int_{q_1}^{q_2}v_k$, and the linear term $l_{1,k}$ is seen to be $-v_k(q_1)\sigma_2-v_k(q_2)\sigma_1$ by (\ref{per_matrix}).

Finally for ${2\leq k,h\leq g}$ we have 
$$
\tau_{k,h}(s)=\tau_{k,h}+s\cdot l_{hk},
$$
where $\tau_{h,k}$ is the period matrix of the normalization of $C$, and $l_{h,k}=-v_k(q_1)v_h(q_2)-v_h(q_1)v_k(q_2)$.


\subsection{Banana curves}

The second example we consider is the stable genus $g$ curve $C$ that has two irreducible components meeting in two distinct nodes (so-called ``banana curve"). This computation has not been done in the literature before.

Let the two components of $C$ be $C_a,C_b$ with genera $g_a$ and $g_b$ where $g_a+g_b=g-1$. The edges corresponding to the two nodes are denoted by $e_1$ and $e_2$. The preimages of the nodes and the local coordinates are denoted as $q_{\pm e_i}$ and $z_{\pm e_i}$ ($i=1,2$), where ``$+$" corresponds to the $C_a$ side, and ``$-$" the $C_b$ side.

 Note that in the case where $C$ has only two components (with any number of nodes connecting them), the path $l^k$ can only go back and forth. Therefore $\xi_{e_i}^{(k)}$ (resp. $\xi_{-e_i}^{(k)}$) ($i=1,2$)  and $\eta_a^{(k)}$ (resp. $\eta_b^{(k)}$) are determined by $\Omega_a$ if $k$ is even (resp. odd), and $\Omega_{b}$ if $k$ is odd (resp. even), as we can see from the terms in expansion (\ref{Yam_sep}). Also note that in expansion (\ref{sol_exp}) of $\eta_a^{(k)}$ and $\eta_b^{(k)}$, there is no residue of $\Omega$ involved. Therefore the we can simplify our computation by assuming that $\Omega_b=0$ and the residues of $\Omega_{a}$ at both nodes are zero. Under these assumptions, we have $\xi_{e_i}^{(0)}(z_{e_i})=\Omega_a(z_{e_i})$ ($i=1,2$). Furthermore, for any integer $k\geq 0$, we have 
 $$\xi_{-e_i}^{(2k)}=\xi_{e_i}^{(2k+1)}=0\qquad(i=1,2),$$
thus by construction (\ref{mainsol}), we have
 \begin{equation*}
 \begin{split}
&\eta^{(2k+1)}_a(z)=0 \qquad z\in\hC_a;\\
&\eta^{(2k)}_b(z)=0\qquad z\in\hC_b.
\end{split}
\end{equation*}
By Proposition \ref{expansion}, we have for $z\in\hC_b$:
\begin{equation*}
\eta_b^{(1)}(z)=-s_1\omega_b(z,q_{-e_1})\txi_{e_1}-s_2\omega_b(z,q_{-e_2})\txi_{e_2}+O(|\us|^2),
\end{equation*}
and for $z\in\hC_a$:
\begin{equation*}
\begin{split}
\eta_a^{(2)}(z)&=s_1^2\omega_a(z,q_{e_1})\beta^b_{1,1}\txi_{e_1}+s_2^2\omega_a(z,q_{e_2})\beta^b_{2,2}\txi_{e_2}\\
&+s_1s_2\left(\omega_a(z,q_{e_1})\beta^b_{1,2}\txi_{e_2}+\omega_a(z,q_{e_2})\beta^b_{2,1}\txi_{e_1}\right)+O(|\us|^3),
\end{split}
\end{equation*}
where $\beta^b_{jk}:=\beta^b_{-{e_j},-{e_k}}$ is the constant term in the expansion of $\omega_b(z_{-e_j},z_{-e_k})$ as in (\ref{w_exp}). 

Note that we can also assume $\Omega_a=0$, and that the residues of $\Omega_b$ at both nodes are zero. The general case follows by adding the differentials in these two cases together.

We now compute the degeneration of period matrix for the banana curve. For the symplectic basis of $H_1(C_\us,\ZZ)$, we let $A_1:=\gamma_{e_2}$, and $B_1$ is taken to intersect each seam once, with the orientation from $q_{e_1}$ to $q_{e_2}$, then from $q_{-e_2}$ to $q_{-e_1}$. Thus we let $v_1:=\omega_{q_{e_2}-q_{e_1}}+\omega_{q_{-e_1}-q_{-e_2}}$ where $\omega_{q_{e_2}-q_{e_1}}$ is supported on $C_a$, and $\omega_{q_{-e_1}-q_{-e_2}}$ on $C_b$. Take $\{A_k,B_k\}_{k=2}^{g_a+1}$ and $\{A_j,B_j\}_{j=g_a+2}^{g}$ to be the symplectic bases of $H_1(C_a,\ZZ)$ and $H_1(C_b,\ZZ)$ respectively. The normalized basis of holomorphic differentials $\{v_k\}_{k=2}^g$ on the two components are taken correspondingly, and we require that $v_k$ is identically zero on $C_b$ if $2\leq k\leq g_a+1$, and on $C_a$ if $g_a+2\leq k\leq g$.

Note that $v_1$ has residues $r_{e_2}=r_{-e_1}=1$, thus we have $\tau_{1,1}(\us)=\ln|s_{1}|+\ln|s_{2}|+c_{1,1}+l_{1,1}+O(|\us|^2)$. By (\ref{per_const}), the constant term is
$$
c_{1,1}=\lim_{|\us|\to 0}\left(\int_{z_{e_1}^{-1}(\sqrt{|s_1|})}^{z_{e_2}^{-1}(\sqrt{|s_{2}|})}\omega_{q_{e_2}-q_{e_1}}+\int_{z_{-e_2}^{-1}(\sqrt{|s_{2}|})}^{z_{-e_1}^{-1}(\sqrt{|s_{2}|})}\omega_{q_{-e_1}-q_{-e_2}}-\ln{|s_{1}|}-\ln{|s_{2}|}\right).
$$
As for the linear term $l_{1,1}$, by (\ref{per_matrix}) we obtain
$$
l_{1,1}=-2s_1\sigma_{-e_1}\sigma_{e_1}-2s_2\sigma_{-e_2}\sigma_{e_2}.
$$

We also see that the expansion of $\tau_{k,1}(\us)=\tau_{1,k}(\us)$ is given by
\begin{equation*}
\tau_{1,k}(\us)=
\begin{cases}
\int_{q_{e_1}}^{q_{e_2}}v_k-s_1v_k(q_{e_1})\sigma_{-e_1}-s_2v_k(q_{e_2})\sigma_{-e_2}+O(|\us|^2) & \mbox{if } 2\leq k\leq g_a+1, \\
\int_{q_{-e_2}}^{q_{-e_1}}v_k-s_2v_k(q_{-e_2})\sigma_{e_2}-s_1v_k(q_{-e_1})\sigma_{e_1}+O(|\us|^2)  & \mbox{if } g_a+2\leq k\leq g.
\end{cases}
\end{equation*}

The remaining $(g-1)\times(g-1)$ minor $\tau_{g-1}(\us):=\{\tau_{h,k}(\us)\}_{h,k=2}^{g}$ of the period matrix is computed as:
\begin{equation*}
\begin{split}
\tau_{g-1}(\us)=&\begin{pmatrix}
\tau_a & 0\\
0 & \tau_b
\end{pmatrix}-s_1\cdot\begin{pmatrix}
0 & ^tR_a(q_{e_1})R_b(q_{-e_1})\\
^tR_b(q_{-e_1})R_a(q_{e_1}) & 0
\end{pmatrix}\\
&-s_2\cdot\begin{pmatrix}
0 & ^tR_a(q_{e_2})R_b(q_{-e_2})\\
^tR_b(q_{-e_2})R_a(q_{e_2})  & 0
\end{pmatrix}+O(|\us|^2),
\end{split}
\end{equation*}
where $\tau_a$ (resp. $\tau_b$) is the period matrix of $C_a$ (resp. $C_b$), and $R_a:=(v_2,\ldots,v_{g_a+1})$, $R_b:=(v_{g_a+2},\ldots,v_g)$.


\subsection{Totally degenerate curves} 

It is a fact that the stable curves that lie in the intersection of the Teichm\"uller curve and the boundary of $\overline\calM_g$ are of arithmetic genus zero. In this subsection we study the largest dimensional boundary stratum of such stable curves and give the variational formula of its period matrix, which to the knowledge of the authors is again not dealt with in literature before. The periods of totally degenerate curves has been studied by Gerritzen in his series of papers \cite{Ger90} \cite{Ger92a} \cite{Ger92b}. The perspectives in those papers are algebraic, mainly by studying the theta functions, the Torreli map and the Schottky problem. No analytic construction such as plumbing is involved.

Let $C$ be a totally degenerate stable curve, namely the normalization $\tC$ is a $\PP^1$ with $g$ pairs of marked points $\{q_{\pm i}\}_{i=1}^g$. Let $q_i$ and $q_{-i}$ be the preimages of the $i$-th node on $C$, and $r_i=-r_{-i}$ be the residue of $\Omega$ at $q_i$.

Let $z$ be the global coordinate, then the local coordinates at the pre-images of nodes are given by
$
z_{\pm i}:=z-q_{\pm i}$, where $i=1,\ldots,g.
$
We have as usual $\xi_{\pm i}^{(0)}(z):=\Omega(z)\mp\frac{r_idz}{z-q_{\pm i}}$, and $\txi_{\pm i}:=\txi^{(0)}_{\pm i}(q_{\pm i})$. 

The Cauchy kernel and the fundamental bidifferential on $\PP^1$ are given explicitly: $K(z,w)=\frac{dz}{2\pi i (z-w)}$; $\omega(z,w)=2\pi i\partial_wK(z,w)=\frac{dzdw}{(z-w)^2}$. We compute $\tw(z,q_i)=\frac{1}{(z-q_i)^2}$, for $i\in\{\pm 1,\ldots,\pm g\}$. The expansion of $\Omega_\us$ is thus given by
\begin{equation*}
\Omega_\us(z)=\Omega(z)-dz\sum_{k=1}^gs_k\left(\frac{\txi_{-k}}{(z-q_k)^2}+\frac{\txi_k}{(z-q_{-k})^2}\right)+O(|\us|^2)
\end{equation*}
where $z\in\hC$.

The classes of the seams $\{[\gamma_i]\}_{i=1}^g$ generate the Lagrangian subgroup of $H_1(C_\us,\ZZ)$, thus we can take $A_i:=\gamma_i$ and $B_i$ the path from $q_{-i}$ to $q_{i}$. The corresponding normalized basis of $1$-forms will be $v_i:=\omega_{q_i-q_{-i}}=dz\left(\frac{1}{z-q_i}-\frac{1}{z-q_{-i}}\right)$ for $i=1,\ldots,g$. Let $\Omega=v_i$, we have for $k\neq\pm i$, $\txi_k=\frac{q_i-q_{-i}}{(q_k-q_i)(q_k-q_{-i})}$ and $\txi_i=\txi_{-i}=\frac{1}{q_{-i}-q_i}$.

One thus computes the period matrix as follows.

\begin{equation*}
\begin{split}
i=j: \tau_{i,i}=&\ln|s_i|-2\ln{|q_i-q_{-i}|}-\frac{2s_i}{(q_i-q_{-i})^2}\\
&-\sum_{k\in\{1,..\hat{i},..g\}}\frac{2s_k(q_i-q_{-i})^2}{(q_k-q_{-i})(q_k-q_i)(q_{-k}-q_{-i})(q_{-k}-q_i)}+O(|\us|^2)
\end{split}
\end{equation*}
\begin{equation*}
\begin{split}
i\neq j: \tau_{i,j}=&\ln{(q_i,q_{-i};q_{j},q_{-j})}-\sum_{k\neq i,j}s_k\Big(\frac{(q_i-q_{-i})(q_j-q_{-j})}{(q_k-q_i)(q_k-q_{-i})(q_{-k}-q_j)(q_{-k}-q_{-j})}\\
&+\frac{(q_i-q_{-i})(q_j-q_{-j})}{(q_k-q_j)(q_k-q_{-j})(q_{-k}-q_i)(q_{-k}-q_{-i})}\Big)\\
&-s_i\frac{q_j-q_{-j}}{q_{-i}-q_i}\Big(\frac{1}{(q_i-q_j)(q_i-q_{-j})}+\frac{1}{(q_{-i}-q_j)(q_{-i}-q_{-j})}\Big)\\
&-s_j\frac{q_i-q_{-i}}{q_{-j}-q_j}\Big(\frac{1}{(q_j-q_i)(q_j-q_{-i})}+\frac{1}{(q_{-j}-q_i)(q_{-j}-q_{-i})}\Big)
+O(|\us|^2)
\end{split}
\end{equation*}
where $(q_i,q_{-i},q_{j},q_{-j})$ stands for the cross-ratio of the (ordered) four points.


\section{Higher Order Plumbing}\label{sec:hop}

In the last section we construct the solution to the jump problem with the initial data arising from the jumps of an Abelian differential that has higher order zeroes and poles at the nodes of the limit curve. Following the terminology in \cite{Gen15} and \cite{BCGGM16}, we call the procedure of smoothing such a differential \textit{higher order plumbing}. We obtain an alternative proof of the sufficiency part of the main theorem in \cite{BCGGM16}. Moreover, our approach gives more information than the two constructions given in that paper. We first give a brief review of definitions and results in \cite{BCGGM16}. Readers that are familiar with this material can safely skip the following subsection.


\subsection{Incidence variety compactification} Let $\mu=(m_1,\ldots,m_n)$ be a partition of $2g-2$, and we assume $m_i>0$ for $i=1,\ldots,n$. We denote $\Omega\calM_{g,n}(\mu)$ to be the stratum whose points are Abelian differentials that have multiplicity $m_k$ at the marked point $p_k$. Let $\PP\Omega\overline\calM_{g,n}^{inc}(\mu)$ be the closure of the strata in the projectivized compactification of the Hodge bundle $\PP\Omega\overline\calM_{g,n}$ over $\overline\calM_{g,n}$, called the {\em incidence variety compactification} (IVC) of the stratum $\Omega\calM_{g,n}(\mu)$ in \cite{BCGGM16}. 

Take a stable pointed differential $(C,\Omega)$ in the boundary of $\Omega\overline\calM_{g,n}$, where $C$ is a stable nodal curve with marked points $p_1,\ldots,p_n$, and $\Omega$ is a stable differential on $C$. Let $(\calC,\mathcal W)\to\Delta$ be a one parameter family in the stratum $\Omega\calM_{g,n}(\mu)$, where $\Delta$ is a disk with parameter $t$, such that $\calC_0=C$. Note that $\Omega$ may be identically zero on some irreducible component $C_v$ of $C$. By an analytic argument \cite[Lemma~4.1]{BCGGM16} one can show that there exist $l_v\in\ZZ_{\leq 0}$ for each $C_v$ such that
$$
\Xi_v:=\lim_{t\to 0}t^{l_v}\Omega_v
$$
is non-zero and not equal to infinity. Such differential $\{\Xi_v\}_v$ must satisfy the following conditions (see the proof of necessity of \cite[Theorem~1.3]{BCGGM16}):
\begin{itemize}
\item [(0)] If $p_k\in C_v$ for some $k$, $\Xi_v$ vanishes to the correct order: $\ord_{p_k}\Xi_v=m_k$;
\item [(1)] The only singularities of $\Xi_v$ are (possible) poles at the nodes of $C_v$; 
\item [(2)] For any node $q_{|e|}$ on $C$, $\ord_{q_e}\Xi_{v(e)}+\ord_{q_{-e}}\Xi_{v(-e)}=-2$;
\item [(3)] If $\ord_{q_e}\Xi_{v(e)}=\ord_{q_{-e}}\Xi_{v(-e)}=-1$ at some node $q_{|e|}$, then the residues are opposite at the node: $\res_{q_e}\Xi_{v(e)}=-\res_{q_{-e}}\Xi_{v(-e)}$.
\end{itemize}
\begin{df}[{\cite[Def.~1.1]{BCGGM16}}]
A differential $\Xi$ satisfying Conditions $(0)\sim (3)$ is called a \textit{twisted differential} of type $\mu$.
\end{df}

Given a one parameter family, $l:v\mapsto l_v$ gives a (full) level function on the vertices of the dual graph $\Gamma_C$. The function $l$ makes $\Gamma_C$ into a level graph, in which the order is denoted by $``\succcurlyeq"$. Moreover, the twisted differential $\Xi$ constructed from the one-parameter family must satisfy the following conditions (again see the proof of necessity of \cite[Theorem~1.3]{BCGGM16}):
\begin{itemize}
\item [(4)] At a node $e$, $v(e)\succcurlyeq v(-e)$ if and only if $\ord_{q_e}\Xi_{v(e)}\geq\ord_{q_{-e}}\Xi_{v(-e)}$, and $v(e)\asymp v(-e)$ if and only if $\ord_{q_e}\Xi_{v(e)}=\ord_{q_{-e}}\Xi_{v(-e)}=-1$
\item [(5)] For any level $L$ in the level graph, for any $v$ such that $l_{v}>L$, let $E^L_v$ be the set of all the nodes $e$ such that $v(e)=v$, $l_{v(-e)}=L$, we have
$$
\sum_{e\in E_v^L}\res_{q_{-e_i}}\Xi_{v(-e_i)}=0.
$$
\end{itemize}
The last condition is called the \textit{Global residue condition} in \cite{BCGGM16}. 

\begin{df}[{\cite[Def.~1.2]{BCGGM16}}]
A twisted differential $\Xi$ is called \textit{compatible} with the stable differential $\Omega$ and the full level function $l$ (or equivalently the full level graph $\Gamma_C$) if (i) $\Xi$ and $l$ satisfy the Conditions (0)$\sim$(5); (ii) the maxima of the level graph correspond to the components $C_v$ where $\Omega_v$ is not identically zero, and on those components, $\Xi_v=\Omega_v$.
\end{df}

The main result of \cite{BCGGM16} is that the necessary and sufficient condition for a pointed stable differential $(C,\Omega)$ to lie in the boundary of the IVC compactification of strata is the existence of a twisted differential $\Xi$ (on $C$) and a full level function $l$ (on $\Gamma_C$) such that $\Xi$ is compatible with $\Omega$ and $l$.


\subsection{Jump problem for higher order plumbing} The proof of sufficiency of this result requires a construction of a family of Abelian differentials in the smooth locus of the strata that degenerates to the limit differential $(C,\Omega)$, given the compatible data $(\Xi,l)$. In \cite{BCGGM16}, the authors give two proofs to the sufficiency by: 1) constructing a one complex parameter family using plumbing; 2) constructing a one real parameter family via a flat geometry argument. We now give a third argument via the jump problem approach. Moreover, the number of parameters over $\CC$ in our degenerating family is equal to the number of levels in $\Gamma_C$ minus $1$. Similar to the plumbing argument used in \cite{BCGGM16}, we will also use a modification differential to match up the residues. Furthermore, the original argument in \cite{BCGGM16} on the operation merging the zeroes will also be applied here to embed the family into the stratum.

Take a plumbing family $\{C_\us\}$ as in Definition \ref{plumbing} such that $C=C_0$, and $\us=\{s_{|e|}\}_{|e|\in |E|_C}$ are the plumbing parameters. Denote the restriction of $\Xi$ on the irreducible component $C_v$ by $\Xi_v$. Let $N_l$ be the number of levels in $\Gamma_C$. Without loss of generality, we assume the range of the level function $l$ to be $\{0,-1,\ldots,1-N_l\}$. 

Assume $j=l_{v(-e)}$, recall that $E_v^j=\{e\in E_v:l_{v(-e)}=j\}$ as defined in condition (5). To glue the twisted differentials $\Xi_v$ and $\Xi_{v(-e)}$, we need to add a {\em modification differential} $\phi_{v,j}$ to $\Xi_v$ in order to match the residue (denoted by $r_{-e}$) of $\Xi_{v(-e)}$. The modification differential $\phi_{v,j}$ is chosen to be any differential which has simple pole at $q_e$ with residue $r_e=-r_{-e}$, where $e\in E^{j}_v$. The global residue condition ensures that the sum of the residues of $\phi_{v,j}$ is zero. The existence of $\phi_{v,j}$ is due to the classical Mittag-Leffler problem. 

For $e\in E_v^j$, assume $\Xi_{v}$ has a zero of order $k_e$ at the node $q_e$, then by Condition $(2)$, $\Xi_{v(-e)}$ has a pole of order $k_e+2$ at the node $q_{-e}$. In order to apply the jump problem to obtain a global differential, the following conditions need to be imposed on the plumbing parameters $\us$:
\begin{itemize}
\item [(i)] For any $e, e'\in E^{j}_v$, we have $s_e^{k_e+1}=s_{e'}^{k_{e'}+1}$;
\item [(ii)] For any two vertices $v_0,v_1$ at different levels (namely $l_{v_0}\neq l_{v_1}$), for any two paths $\{e_i\}_{i\in I}$ and $\{\tilde e_j\}_{j\in J}$ connecting $v_0,v_1$ with $l_{v(e_i)}>l_{v(-e_i)}$ ($\forall i\in I$) and the same for $\{\tilde e_j\}$, we have $\prod_{i\in I}s_{e_i}^{k_{e_i}+1}=\prod_{j\in J}s_{\tilde e_j}^{k_{\tilde e_j}+1}=:s(v_0,v_1)$;
\item [(iii)] If $l_{v_0}=l_{v_1}$, we require that $s(v_0,v_1)=1$.
\end{itemize}

It is important to remark that for such a tuple of plumbing parameters one can deduce that $s(v_0,v_1)$ depends only on the levels of $v_0,v_1$, namely,  $s(v_0,v_1)=s(v_0',v_1')$ as long as $l_{v_0}=l_{v_0'}$ and $l_{v_1}=l_{v_1'}$. We can thus choose one parameter for each level drop:

\begin{df}
Let $t_{i,j}:=s(v_0,v_1)$ where $v_0,v_1$ are two vertices at level $i,j$ respectively. The tuple $\ut:=\{t_{-1},\ldots,t_{1-N_l}\}$ where $t_i:=t_{0,i}/t_{0,i+1}$ are called the {\em scaling parameters}.
\end{df}

Note that $t_{i,j}=\prod_{k=i}^{j}t_k$. The theorem below gives a degenerating family of Abelian differentials parametrized by $\ut$ with central fiber the differential $(C,\Omega)$ in the boundary of the IVC.


\begin{thm}\label{hop}
Let $(C,\Omega,p_1,\ldots,p_n)$ be a stable pointed differential in a given stratum $\Omega\calM_{g,n}(\mu)$. Given the triple $(C,\Xi,l)$ where $\Xi$ is a twisted differential of type $\mu$ on $C$ and $l$ is a full level function on $\Gamma_C$, such that $\Xi$ is compatible with $\Omega$ and $l$, there exists a degenerating family of Abelian differentials $(C_\ut,\Xi_\ut)\subset\Omega\calM_{g,n}(\mu)$ such that $\lim_{\ut\to 0}(C_\ut,\Xi_\ut)=(C,\Omega)$, where $\ut$ are the scaling parameters. 
\end{thm}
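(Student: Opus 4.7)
The plan is to reduce the higher-order plumbing construction to a single application of Theorem~\ref{main} by first turning the twisted differential into a genuine stable differential via a combination of level-wise rescaling and residue modification. Fix local coordinates $z_{\pm e}$ at each node adapted to $\Xi$, so that the leading coefficients of $\Xi_{v(e)}$ and $\Xi_{v(-e)}$ at $q_{\pm e}$ are normalized. On each irreducible component $C_v$ at level $l_v\leq 0$, define the rescaled differential
\begin{equation*}
\widetilde{\Xi}_v := t_{0,l_v}\,\Xi_v,
\end{equation*}
where $t_{0,0}=1$ at the top level (so $\widetilde{\Xi}_v=\Omega_v$ there) and $t_{0,l_v}$ at lower levels is the product of the scaling parameters $t_{-1},\ldots,t_{l_v+1}$ accumulated from the top. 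Conditions (i)--(iii) on the plumbing parameters $\us$ are precisely what is needed so that, for every oriented edge $e$ with $l_{v(e)}>l_{v(-e)}$, the pull-back $I_e^*$ sends the leading polar term of $\widetilde{\Xi}_{v(-e)}$ at $q_{-e}$ (of order $-k_e-2$) to the leading zero term of $-\widetilde{\Xi}_{v(e)}$ at $q_e$ (of order $k_e$), so that the leading-order jumps across the seams cancel.

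To absorb the remaining subleading residue discrepancies, for each $v$ and each level $j<l_v$ use the Mittag--Leffler theorem on $C_v$ to produce a meromorphic differential $\phi_{v,j}$ whose only singularities are simple poles at the nodes $q_e$ for $e\in E_v^j$, with prescribed residues designed to match $-\res_{q_{-e}}\widetilde{\Xi}_{v(-e)}$ at $q_e$. The Global Residue Condition (5), after scaling by $t_{0,j}$, guarantees that these prescribed residues on each $C_v$ sum to zero, so the $\phi_{v,j}$ exist. Setting
\begin{equation*}
\Xi_v^{\op{st}} := \widetilde{\Xi}_v + \sum_{j<l_v}\phi_{v,j},
\end{equation*}
the collection $\{\Xi_v^{\op{st}}\}$ is a stable differential on $C$ in the sense of Section~\ref{sec:notation}: at each node it has at most a simple pole with opposite residues across the edge.

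We now apply Theorem~\ref{main} to the stable differential $\Xi^{\op{st}}$ to obtain the jump-problem correction $\{\eta_v\}$ with $\|\eta_v\|_{L^2}=O(\sqrt{|\us|})$, and set $\Xi_{v,\ut}:=\Xi_v^{\op{st}}+\eta_v$. These glue to a global holomorphic differential $\Xi_\ut$ on $C_\ut$; uniform convergence on compact sets away from the nodes, together with the vanishing of $t_{0,l_v}$ for $l_v<0$ and of the modification differentials as $\ut\to 0$, yields $\lim_{\ut\to 0}(C_\ut,\Xi_\ut)=(C,\Omega)$. To place the family inside the stratum $\Omega\calM_{g,n}(\mu)$, observe that on any top-level component containing $p_k$ the differential $\Xi_{v,\ut}$ differs from $\Omega_v$ by a term of order $O(\sqrt{|\us|})$, so by Rouch\'e's theorem $\Xi_\ut$ has exactly $m_k$ zeros (counted with multiplicity) in a small disk around $p_k$, possibly split. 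Following the \emph{merging-of-zeros} argument of \cite{BCGGM16}, one replaces $p_k$ by an analytically varying section $p_k(\ut)$ along which these zeros coalesce into a single zero of order $m_k$, yielding the desired family in $\Omega\calM_{g,n}(\mu)$.

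The main technical obstacle is the final stratum-membership step: a priori the jump-problem correction $\eta_v$ splits the order-$m_k$ zero of $\Omega_v$ at $p_k$ into $m_k$ nearby simple zeros, so the family produced by Theorem~\ref{main} alone lies only in $\Omega\overline{\calM}_{g,n}$. Implementing the merging-of-zeros procedure requires a local biholomorphic reparameterization around each $p_k$ depending analytically on $\ut$, whose existence follows from an implicit-function argument applied to the symmetric functions of the simple zeros; this is the part of the construction where the explicit control of $\eta_v$ from Theorem~\ref{main} (in particular the $\sqrt{|\us|}$ bound) is essential.
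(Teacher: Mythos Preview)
There is a genuine gap in your reduction to Theorem~\ref{main}. You claim that the collection $\{\Xi_v^{\op{st}}\}$ is a \emph{stable} differential on $C$, i.e.\ one with at worst simple poles at the nodes and opposite residues. This is false. Take an edge $e$ with $l_{v(e)}=i>l_{v(-e)}=j$. On the lower component $C_{v(-e)}$ the twisted differential $\Xi_{v(-e)}$ has a pole of order $k_e+2\ge 2$ at $q_{-e}$; rescaling by $t_{0,j}$ does not change the pole order, and your modification differentials $\phi_{v(-e),j'}$ are supported only at nodes in $E_{v(-e)}^{j'}$ for $j'<j$, i.e.\ at nodes going \emph{down} from $v(-e)$, never at $q_{-e}$. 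Hence $\Xi_{v(-e)}^{\op{st}}$ still has a pole of order $k_e+2$ at $q_{-e}$, and Theorem~\ref{main} as stated does not apply.

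What actually makes the construction work is not that the individual $\Xi_v^{\op{st}}$ are stable, but that the \emph{jump data} are holomorphic: the scaling conditions (i)--(iii) are arranged precisely so that $t_{0,j}I_e^*P(\Xi_{v(-e)})$ is holomorphic in $z_e$ (the factor $s_e^{-k_e-1}$ produced by pulling back $z_{-e}^{-k_e-2}dz_{-e}$ is exactly cancelled by $t_{i,j}$). The paper therefore does \emph{not} invoke Theorem~\ref{main} as a black box; it defines the initial data $\xi^{(0)}_{\pm e}$ directly, checks holomorphicity by the cancellation just described, and then reruns the iteration (\ref{mainjumps})--(\ref{mainsol1}) and the estimates of Lemma~\ref{series} to obtain the $L^2$-bound. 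Your argument would be repaired by replacing the sentence ``the collection $\{\Xi_v^{\op{st}}\}$ is a stable differential'' with this explicit verification and rerunning the bound, which is exactly what the paper does. A secondary omission: marked points $p_k$ need not lie on top-level components, so your Rouch\'e/merging discussion must also cover lower levels, where the relevant comparison is between $t_{0,l_v}\Xi_v$ and the (yet smaller) modification and correction terms.
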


\begin{proof}

The proof is completed in three steps. Firstly we construct via the jump problem method a degenerating family of Abelian differentials $(C_\ut,\hXi_\ut)$ in $\Omega\calM_{g,n}$. Then we show that the family lies sufficiently ``near" the stratum, that is, we show that the solution to the jump problem is uniformly controlled by some positive power of $|\ut|:=\max_{1\leq i\leq N_l-1}|t_i|$. Lastly we apply \cite[Lemma~4.7]{BCGGM16} to merge the zeroes of $\hXi_\ut$ to obtain a family contained in the stratum.

For the jump problem construction, we only need to construct the correct initial data $\{\xi_e^{(0)}\}$, the rest of the construction is given by (\ref{mainjumps}) and (\ref{mainsol1}).

Assume that the vertex $v$ lies on the $i$-th level. We define 
$$
\hXi_v:=\Xi_v+\sum_{j<i}t_{i,j}\phi_{v,j},
$$
where $\phi_{v,j}$ is the modification differential we defined earlier.

We now apply the jump problem construction to glue the differentials at the opposite sides of each node $q_{|e|}$. Assume $v(e)$ and $v(-e)$ are on the levels $i$ and $j$ respectively. We glue $t_{0,i}\cdot\hXi_v$ and $t_{0,j}\cdot\hXi_{v(-e)}$ from the opposite sides of the node $q_{|e|}$. Namely, let
\begin{equation*}
\begin{split}
\xi_e^{(0)}(z_e)&:=t_{0,i}\cdot\left(\hXi_v(z_e)-t_{i,j}I^*_eP(\hXi_{v(-e)})(z_e)\right);\\
\xi_{-e}^{(0)}(z_{-e})&:=t_{0,j}\cdot\hol(\hXi_{v(-e)})(z_{-e}),
\end{split}
\end{equation*}
where $P(\cdot)$ denotes the principal part of a differential, and $\hol(\cdot)$ denotes the holomophic part. Conditions $(i)\sim(iii)$ ensures that $t_{0,i}t_{i,j}=t_{0,j}$. 

Note that the initial data $t_{0,i}(\hXi_v-I_e^*\hXi_{v(-e)})(z_e)$ is equal to $(\xi_e^{(0)}-I^*_e\xi_{-e}^{(0)})(z_e)$. In order to apply (\ref{mainjumps}) and (\ref{mainsol1}) to construct the $A$-normalized solution to the jump problem with this initial data, we need to show that $\xi_e^{(0)}(z_e)$ is holomorphic in $z_e$. It is immediate because the pair of differentials $\hXi_v$ and $t_{i,j}\hXi_{v(-e)}$ have opposite residues at the node $q_{|e|}$ and the pull-back of the principal part by $I_e$ is holomorphic.

We recall here the construction of the $A$-normalized solution in (\ref{mainjumps}) and (\ref{mainsol}):  For $k\geq 1$, we define
\begin{equation*}
\begin{split}
&\text{for } z_e\in U_e:\qquad\xi_{e}^{(k)}(z_e):=\sum_{e'\in E_v}\int_{w_{e'}\in\gamma_{e'}}\bK_{v}(z_e,w_{e'})\cdot I^*_{e'}\xi_{-e'}^{(k-1)}(w_{e'});\\
&\text{for } z\in \hC_v:\qquad\eta_{v}^{(k)}(z):=\sum_{e\in E_v}\int_{z_e\in\gamma_e}K_v(z,z_e)\cdot I^*_e\xi_{-e}^{(k-1)}(z_{e}).
\end{split}
\end{equation*}
By Theorem \ref{main}, $\eta_v:=\sum_{k\geq 1}\eta_{v}^{(k)}(z)$ is the $A$-normalized solution to the jump problem of higher order zeroes and poles. 

Similar to the proof of Theorem \ref{main}, we need to show that $\eta_v$ is convergent by giving a $L^2$-bound for the solution $\eta_v$. We can repeat the proof in Lemma \ref{series} and Theorem \ref{main} to get (\ref{sol_bound}), which we recall as
$$
||\eta_v||_{L^2}<\sqrt{|\us|}^{1+\ord\underline\txi^{(0)}}M|\underline\txi^{(0)}|_{\underline 1}
$$
for some constant $M$. We have shown above that $\xi_e^{(0)}(z_e)$ is holomorphic for every $e$, therefore $\ord\underline\txi^{(0)}=\min_e\ord_{q_e}\txi_e^{(0)}\geq 0$. The only thing left to show here is that $|\underline\txi^{(0)}|_{\underline 1}$ is bounded by some power of $\ut$, in other words, the power of $\ut$ in $\xi_{\pm e}^{(0)}$ is non-negative for any $e$.

Note that the power of $\ut$ in $\xi_{-e}^{(0)}$ is automatically non-negative, we only need to show the same holds for $\xi_{e}^{(0)}$. Note that when pulling back the principal part of $\hXi_{v(-e)}$ through $I_e$, its lowest order term $z_{-e}^{-k_e-2}dz_{-e}$ contributes a factor of $s_e^{-k_e-1}$, which is seen to be equal to $t_{i,j}^{-1}$ by condition $(ii)$. Since all other terms in the principal part contribute factors of lower powers of $s_e$, the power of $\ut$ in $\xi_e^{(0)}$ must be non-negative. We can thus apply the same argument as in the proof of Lemma \ref{series} and Theorem \ref{main} and achieve an $L^2$-bound for $\eta_v$.

Let $\hXi_{v,\ut}:=t_{0,i}\hXi_v+\eta_v$ for any $v$ at level $i$, then by the argument in the proof of Theorem \ref{main}, we have that $\{\hXi_{v,\ut}\}_v$ glues to a global differential $\hXi_\ut$ on $C_\ut$ such that $\lim_{\ut\to 0}(C_\ut,\hXi_\ut)=(C,\Omega)$.

Note that by adding the modification differential $\phi_{v,j}$ and the solution differential $\eta_v$ to $\Xi_v$, the zeroes of multiplicity $m_i$ of $\Xi_v$ at $p_i\in C_v$ are broken into $m_i$ simple zeroes in a small neighborhood $U_i$ of $p_i$. The radius of the neighborhood is controlled by the norm of the added differentials. The modification differentials $\phi_{v,j}$ are multiples of $t_{i,j}$, and the argument above gives the $L^2$-bound on $\eta_v$. We can thus merge the zeroes of $\hXi_\ut$ using the arguments in \cite[Lemma~4.7]{BCGGM16} to get the wanted degenerating family $(C,\Xi_\ut)$ with $\ord_{p_i}\Xi_\ut=m_i$, and $\lim_{\ut\to 0}(C_\ut,\Xi_\ut)=(C,\Omega)$.

\end{proof}

Although the differential $\Xi_\ut$ depends on the choice of the modification differentials $\{\phi_v^L\}$,  the existence of such a degenerating family does not rely on the choice of $\{\phi_v^L\}$. Theorem \ref{hop} in particular implies:

\begin{cor}\label{ivc_comp}\cite[Sufficiency Part of Theorem~1.3]{BCGGM16}
A stable pointed differential $(C,\Omega,p_1,\ldots,p_n)$ lies in the boundary of $\PP\Omega\overline\calM_{g,n}^{inc}(\mu)$ if there exist a twisted differential  $\Xi$ of type $\mu$ and a full level function $l$ such that $\Xi$ is compatible with $\Omega$ and $l$.
\end{cor}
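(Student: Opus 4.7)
The plan is to adapt the jump problem machinery of Theorem \ref{main} to the higher-order setting, using the level function to prescribe a consistent rescaling on each component. On a component $C_v$ at level $i$, I would rescale $\Xi_v$ by $t_{0,i}$, so that the two sides of a node meeting at levels $i > j$ are brought onto comparable scales via $t_{0,i}$ versus $t_{0,j}$. Conditions (i)--(iii) on $\us$ are precisely what ensure that these scalings are globally consistent around loops in the dual graph, so that a single tuple $\ut$ encodes the entire family.

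Before setting up the jump problem I would first modify $\Xi_v$ so that residues match across each node. The global residue condition (5) is exactly what guarantees, via Mittag--Leffler on $C_v$, the existence of a meromorphic differential $\phi_{v,j}$ with prescribed simple-pole residues $-\res_{q_{-e}}\Xi_{v(-e)}$ at each node $e\in E_v^j$. Setting $\hXi_v := \Xi_v + \sum_{j<i} t_{i,j}\phi_{v,j}$, I would then take as initial jump data $\xi_e^{(0)} := t_{0,i}\bigl(\hXi_v - t_{i,j}\,I_e^* P(\hXi_{v(-e)})\bigr)$ on the $v(e)$ side and $\xi_{-e}^{(0)} := t_{0,j}\,\hol(\hXi_{v(-e)})$ on the $v(-e)$ side, where $P(\cdot)$ denotes the principal part. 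The essential check is that $\xi_e^{(0)}$ is holomorphic near $q_e$: the pull-back $I_e^*(z_{-e}^{-k_e-2}dz_{-e}) = -s_e^{-k_e-1}z_e^{k_e}dz_e$, and since $s_e^{k_e+1} = t_{i,j}$ by condition (ii), this leading term exactly cancels the zero of order $k_e$ of $\hXi_v$ at $q_e$; the residue and lower-order principal-part terms produce holomorphic corrections (residues were already matched by $\phi_{v,j}$).

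With holomorphic initial data $\{\xi_{\pm e}^{(0)}\}$ in hand, I would feed them into the iterative construction (\ref{mainjumps}), (\ref{mainsol}) and invoke the convergence and $L^2$-estimates of Lemma \ref{series} and Theorem \ref{main} verbatim. The only new point is tracking dependence on $\ut$: each summand in the initial data carries a nonnegative power of $\ut$ (the dangerous pull-back term contributes $t_{0,i}\cdot t_{i,j}\cdot s_e^{-k_e-1} = t_{0,j}$, still a positive power), so $|\underline{\txi}^{(0)}|_{\underline 1}$ is bounded by some $|\ut|^\alpha$ with $\alpha\geq 0$ and $\|\eta_v\|_{L^2}\to 0$ as $\ut\to 0$. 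Setting $\hXi_{v,\ut} := t_{0,i}\hXi_v + \eta_v$ and gluing across seams produces a holomorphic differential $\hXi_\ut$ on $C_\ut$ with $\lim_{\ut\to 0}\hXi_\ut = \Omega$, since only top-level components ($t_{0,i}=1$) survive in the limit, and on those $\Xi_v = \Omega_v$ by compatibility.

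The last step is to upgrade the family to land in the stratum $\Omega\calM_{g,n}(\mu)$ rather than just in $\Omega\calM_{g,n}$: a zero of order $m_k$ of $\Xi_v$ at a marked point $p_k$ can split into a cluster of simple zeros after our modification, and these can be merged back into a single marked zero of the correct order by the argument of \cite[Lemma~4.7]{BCGGM16}, which is applicable because the perturbation is uniformly controlled by $\ut$. I expect the main obstacle to be the holomorphicity verification for $\xi_e^{(0)}$ combined with the uniform-in-$\ut$ bounds---the cancellation between $t_{0,i}\hXi_v$ and the pulled-back principal part $t_{0,j}I_e^*P(\hXi_{v(-e)})$ hinges delicately on the exact identity $s_e^{k_e+1} = t_{i,j}$, and a careless normalization elsewhere would destroy either the cancellation or the uniform estimate, hence the importance of conditions (i)--(iii).
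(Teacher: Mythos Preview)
Your proposal is correct and follows essentially the same approach as the paper's proof of Theorem~\ref{hop}: define $\hXi_v$ by adding the $t_{i,j}$-weighted modification differentials $\phi_{v,j}$, take the same initial data $\xi_{\pm e}^{(0)}$, verify holomorphicity and nonnegative $\ut$-powers via $s_e^{k_e+1}=t_{i,j}$, invoke the iterative construction and $L^2$-bound of Theorem~\ref{main}, and finish by merging zeroes with \cite[Lemma~4.7]{BCGGM16}. The only imprecision is the phrase ``this leading term exactly cancels the zero of order $k_e$ of $\hXi_v$''---nothing is being cancelled; the point is simply that the pulled-back principal part is holomorphic (indeed vanishes to order $k_e$), so $\xi_e^{(0)}$ is holomorphic, which is what the paper observes directly.
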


{\small \section*{Acknowledgements}
Both authors thank Samuel Grushevsky, the current advisor of the first author and the former advisor of the second author, for the useful discussions throughout the process. The second author would like to thank Marco Bertola and Igor Krichever for conversations related to this work. We thank Scott Wolpert for reading the first version of the paper carefully and his valuable comments. We also thank the referees for their valuable advices which in specific helped us make the paper more accessible. Finally, we thank the organizers of the Cycles on Moduli Spaces, GIT, and Dynamics workshop in ICERM, Brown University, August 2016, where this collaboration started.
}

\end{document}